\newcommand{\be}[1]{\begin{equation*}#1\end{equation*}}
\newcommand{\ben}[1]{\begin{equation}#1\end{equation}}
\def\ml#1{\begin{multline*}{#1}\end{multline*}}
\def\mln#1{\begin{multline}{#1}\end{multline}}
\newcommand{\rom}[1]{\uppercase\expandafter{\romannumeral #1\relax}}
\newcommand{\beas}{\begin{eqnarray*}}
\newcommand{\enas}{\end{eqnarray*}}
\newcommand{\bea}{\begin{eqnarray}}
\newcommand{\ena}{\end{eqnarray}}
\newcommand{\bms}{\begin{multline*}}
\newcommand{\ems}{\end{multline*}}
\newcommand{\bels}{\begin{align*}}
\newcommand{\enls}{\end{align*}}
\newcommand{\bel}{\begin{align}}
\newcommand{\enl}{\end{align}}
\newcommand{\ignore}[1]{}
\def\blfootnote{\xdef\@thefnmark{}\@footnotetext}
\newcommand{\dotp}[2]{\left\langle#1,#2\right\rangle}
\newcommand{\m}{\mathcal}
\newcommand{\E}{\mathbf{E}}
\newcommand{\mb}{\mathbb}
\newcommand\argmin{\mathop{\mbox{argmin}}}
\newcommand{\card}{\mathrm{Card}}
\def\r{\right}
\def\l{\left}
\newcommand{\var}{\mbox{Var}}
\newcommand{\wh}{\widehat}
\newcommand{\wt}{\widetilde}
\newcommand{\Pt}{\mathcal{P}_{\tau,a}}
\newcommand{\Prob}{\mathrm{Pr}}
\newcommand{\vertiii}[1]{{\left\vert\kern-0.25ex\left\vert\kern-0.25ex\left\vert #1 
    \right\vert\kern-0.25ex\right\vert\kern-0.25ex\right\vert}}
\newtheoremstyle{mytheoremstyle}
  {\topsep} % Space above
  {0pt} % Space below
  {\itshape} % Body font
  {} % Indent amount
  {\bfseries} % Theorem head font
  {.} % Punctuation after theorem head
  {.5em} % Space after theorem head
  {} % Theorem head spec (can be left empty, meaning `normal') 
\theoremstyle{mytheoremstyle}
\numberwithin{equation}{section}
\newtheorem{theorem}{Theorem}[section]
\newtheorem{proposition}{Proposition}[section]
\newtheorem{lemma}{Lemma}[section]
\newtheorem{assumption}{Assumption}
\newtheorem{remark}{Remark}[section]
\begin{document}
\nolinenumbers
\begin{frontmatter}
%Adaptive robustness and sub-Gaussian deviations in sparse linear regression through Pivotal Double Slope\\
\title{
Robust and Tuning-Free Sparse Linear Regression via Square-Root Slope}
%\runtitle{TBA}
%\thankstext{T1}{Footnote to the title with the `thankstext' command.}

%\begin{comment}

\begin{aug}
\author[A]{\fnms{Stanislav} \snm{Minsker,}\ead[label=e1]{minsker@usc.edu}}
\author[B]{\fnms{Mohamed} \snm{Ndaoud}\ead[label=e2]{ndaoud@essec.edu}}
\and
\author[A]{\fnms{Lang} \snm{Wang}\ead[label=e3]{langwang@usc.edu}}

%\thankstext{t1}{Some comment}
%\thankstext{t2}{First supporter of the project}
%\thankstext{t3}{Second supporter of the project}
%\runauthor{Minsker and Ndaoud}
\address[A]{Department of Mathematics,\\ University of Southern California\\
\printead{e1,e3}}

\address[B]{Department of Information Systems, Decision Sciences and Statistics\\
ESSEC Business School\\
95000 Cergy, France\\
\printead{e2}}
%\phantom{E-mail:\ }\printead*{e2}}

%\address[C]{Department of Mathematics,\\ University of Southern California\\
%\printead{e3}}

\end{aug}

\begin{abstract}
We consider the high-dimensional linear regression model and assume that a fraction of the measurements are altered by an adversary with complete knowledge of the data and the underlying distribution. We are interested in a scenario where dense additive noise is heavy-tailed while the measurement vectors follow a sub-Gaussian distribution. Within this framework, we establish minimax lower bounds for the performance of an arbitrary estimator that depend on the the fraction of corrupted observations as well as the tail behavior of the additive noise. Moreover, we design a modification of the so-called Square-Root Slope estimator with several desirable features: (a) it is provably robust to adversarial contamination, and satisfies performance guarantees in the form of sub-Gaussian deviation inequalities that match the lower error bounds, up to logarithmic factors; (b) it is fully adaptive with respect to the unknown sparsity level and the variance of the additive noise, and (c) it is computationally tractable as a solution of a convex optimization problem.
To analyze performance of the proposed estimator, we prove several properties of matrices with sub-Gaussian rows that may be of independent interest.
    \begin{comment}
    
Consider the sparse linear model where some of the entries can be corrupted and the noise heavy-tailed, i.e.
\[
Y = X\beta^* + \sqrt{n}\theta^* +\sigma \xi,
\]
where $\beta^*$ and $\theta^*$ are both sparse vectors of sparsity $s$ and $o$ respectively, and $\xi$ the noise vector.

After deriving the minimax quadratic risk for estimation of $\beta^*$, we propose a practical and fully adaptive procedure that is optimal. Our procedure corresponds to solving the following pivotal estimation problem
$$
\underset{\beta \in \mathbf{R}^p, \theta \in \mathbf{R}^n}{\min} \frac{1}{\sqrt{n}} \|Y - X\beta - \sqrt{n}\theta\| + \|\theta\|_\mu + \|\beta\|_\lambda,
$$
where $\|\cdot\|_\eta$ corresponds to a sorted $\ell_1$ norm that depends on some sequence $(\eta)_m$. Our procedure is not only minimax optimal and robust but also enjoys sub-Gaussian deviations even in the presence of heavy-tailed noise.
    \end{comment}
\end{abstract}

%\begin{keyword}[class=MSC]
%\kwd[Primary ]{60K35}
%\kwd{60K35}
%\kwd[; secondary ]{60K35}
%\end{keyword}
\begin{keyword}[class=MSC]
\kwd[Primary ]{62F35}
\kwd{62J07}
%\kwd[; secondary ]{60K35}
\end{keyword}

\begin{keyword}
\kwd{robust inference, sub-Gaussian deviation, Slope, sparse linear regression}
%\kwd{\LaTeXe}
\end{keyword}
\tableofcontents
\end{frontmatter}

\mathtoolsset{showonlyrefs=true}

%################
\section{Introduction}
%################
% \cite{tukey1960survey, huber1992robust}. 
Robust statistics, broadly speaking, is an arsenal of estimation and inference techniques that are resistant to model perturbations. Data generated from a perturbed model will often contain atypical observations, commonly referred to as outliers. This paper is devoted to robust estimation in the context of high-dimensional sparse linear regression. Assume that a sequence of random pairs 
$(X_1,y_1),\ldots,(X_n,y_n)$ is generated according to the model
\[
y_i = X_i^T\beta^* + \sqrt{n}\theta^*_i +\sigma \xi_i, \quad i=1,\ldots,n.
\]
Here, each $y_i\in \mb R$ is a linear measurements of an unknown vector $\beta^\ast \in \mb R^p$ that has $s$ non-zero coordinates. The measurement vectors $X_i\in \mb R^p, \ j=1,\ldots,n$ are independently sampled from a distribution with unknown covariance matrix $\Sigma$. We assume that the measurements $y_i$ are contaminated by the noise $\sigma \xi_i$ where $\sigma>0$ and $\xi_1,\ldots,\xi_n$ are i.i.d. random variables with unit variance, independent from $X_1,\ldots,X_n$. Finally, the adversarial noise is modeled by the additive term $\sqrt{n}\theta^\ast_i$\footnote{See remark \ref{remark:scaling} related to the $\sqrt{n}$ factor.}, where the sequence 
$\theta^\ast_1,\ldots,\theta^\ast_n$ has $o<n$ non-zero elements and is generated by an adversary who has access to $\{(y_i,X_i,\xi_i)\}_{i=1}^n, \beta^\ast$, $\sigma$, as well as the joint distribution of all random variables involved. 
We are interested in the situation when (a) when $p$ is possibly much larger than $n$ but $s$ is smaller than $n$, and (b) the random variables 
$\{\xi_i\}_{i=1}^n$ are possibly heavy-tailed, distributed according to a law with polynomially decaying tails. 

The well-known Lasso estimator \cite{tibshirani1996regression}, as well as its sibling, the Dantzig selector \cite{candes2007dantzig}, provably achieve strong performance guarantees in the sparse prediction and estimation tasks. For example, the Lasso estimator is the solution to the following optimization problem: 
\[
\widetilde\beta = \argmin_{\beta\in \mb R^p} \l[\frac{1}{n}\sum_{j=1}^n (y_j - X_j^T \beta)^2 + \lambda\|\beta\|_1\r]
\]
where $\lambda>0$ is the regularization parameter and $\|\cdot\|_p$ stands for the $\ell_p$ norm of a vector, $p\geq 1$. It is known that theoretically optimal value of the parameter $\lambda$ is proportional to $\sigma \sqrt{\frac{\log(ep/s)}{n}}$ \cite{bellec2018slope,ndaoud}, in particular, it depends on the unknown variance of the noise as well as the unknown sparsity level $s$. In addition, the Lasso estimator $\widetilde\beta$ is not robust to the presence of gross outliers, i.e. the norm $\|\beta^\ast-\widetilde\beta\|_2$ can be arbitrarily large if $\theta^\ast$ has just 1 non-zero element that can take arbitrary values.
In this paper, we propose robust version of the pivotal Slope (``Sorted $\ell$-One Penalized Estimation'') algorithm \cite{derumigny2018improved} that is provably robust to the heavy-tailed additive noise and the adversarial corruption; moreover, it is tuning-free. 
The proposed estimator combines the ideas behind the original Slope algorithm \cite{bogdan2015slope} that eliminates dependence of the optimal choice of $\lambda$ on the sparsity level $s$, the square-root Lasso \cite{belloni2011square} that allows to set $\lambda$ independently of the noise variance $\sigma^2$, and moreover takes advantage of the robustness stemming from connections between the Huber's loss and the $\ell_1$ - penalized squared loss \cite{sardy2001robust,gannaz2007robust}. Specifically, we prove that the estimator $\wh \beta$ produced by robust pivotal Slope and formally defined via \eqref{estimator:sqrt lasso} below admits the following performance guarantees under suitable assumptions on the covariance matrix $\Sigma$ of the design vectors: 
\begin{enumerate}
\item[(a)] If both the design vectors $X_j$ and the noise variables $\xi_j, \ j=1,\ldots,n$ have sub-Gaussian distributions, then
\[
\|\wh{\beta} - \beta^*\|^2_\Sigma  \lesssim \sigma^2 \l(\frac{s\log(ep/s) }{n} + \l(\frac{o\log(n/o)}{n} \r)^2 +\frac{\log(1/\delta)}{n} \r)
\]
with probability at least $1-\delta$, where $\lesssim$ denotes the inequality up to an absolute constant and $\|x\|^2_\Sigma := \dotp{\Sigma x}{x}$. In particular, the upper bound is minimax optimal with respect to sparsity level $s$ and the number of outliers $o$.
\item[(b)] If $X_j$'s are sub-Gaussian but $\xi_j$'s are heavy tailed, meaning that $\E(|\xi|^\tau) < \infty $ for some $\tau\geq 4$, and in the absence of adversarial contamination (i.e. $\theta^\ast \equiv 0$), 
\[
\|\wh{\beta} - \beta^*\|^2_\Sigma  \lesssim \sigma^2 \l(\frac{s\log(ep/s) }{n} +\frac{\log(1/\delta)}{n} \r)
\]
with probability at least $1-\delta$. In other words, $\|\wh{\beta} - \beta^*\|_\Sigma$ admits sub-Gaussian deviation guarantees despite the fact that the noise is allowed to be heavy-tailed.
 \item[(c)] Finally, if $\E(|\xi|^\tau) <\infty$ for some $\tau \geq 2$ and $s\log(p/s) + \log(1/\delta) + o \lesssim n$, a version of the proposed estimator satisfies
\ml{
\|\wh{\beta} - \beta^*\|^2_\Sigma  \lesssim \sigma^2 \l(\frac{s\log(ep/s) }{n} \r. 
\\
\l. + \l(\frac{o}{n}\r)^{2-2/\tau}\log\l( \frac{n}{o}\r) \l(1 + \l(\frac{o}{\log(1/\delta)}\r)^{2/\tau}\r) +\frac{\log(1/\delta)}{n} \r)
}
with probability at least $1-\delta$. It implies that whenever $\log(1/\delta)\gtrsim o$, the upper bound depends  optimally (up to a logarithmic factor) on the number of adversarial outliers, as well as on the sparsity level $s$. Note that the deviations guarantees are again sub-Gaussian. 
\end{enumerate}

%###################################
\subsection{Structure of the paper.}
%###################################

The rest of the exposition is organized as follows: notation and key definitions are summarized in section \ref{section:topic2_notation}. In section \ref{section:topic2_formulation}, we explain the main ideas leading to the definition of the pivotal Slope estimator and state the theoretical guarantees related to its performance, along with the information-theoretic lower bounds. This is followed by a discussion and comparison to existing results in section \ref{section:discussion}. Finally, the proofs of the main results are presented in the appendix.

%section:topic2_main
 %In the past two decades, a significant effort the focus of an increasing amount of practical applications created a high demand for the tools to recover high-dimensional parameters of interest from grossly corrupted measurements. 
%\newline\indent As a concrete example, consider the face recognition problem in \cite{wright2008robust}. In this problem, we are aiming to classify a new test sample $y$, which is assumed to be represented as $y = X\beta$, with $X$ being the matrix of training faces, and $\beta$ being the coefficient vector to be estimated. In practice, $\beta$ is a sparse vector lying in a high dimensional space, and the testing face sample $y$ is possibly occluded by unwanted objects such as hats, sun glasses, face masks, etc, which can be viewed as corruption. \cite{wright2008robust} proposed a mathematical model
%where $\big(X_1, y_1\big), \ldots, \big(X_n, y_n\big) \in \mb{R}^{p} \times \mb{R}$ are pairs of predictor-response values, $\beta^*\in \mb{R}^{p}$ is the vector of true regression coefficients, $\xi_1, \ldots, \xi_n \in \mb{R}$ are i.i.d. noises and $\theta_1^{*}, \ldots, \theta_n^{*} \in \mb{R}$ are arbitrary (possibly random) values such that only a small portion of them are nonzeros. We are aiming to find an estimator for $\beta^*$ which is computationally efficient and achieves the theoretically optimal error bound.

%%################# Preliminaries ###########
%\section{Preliminaries}
%\label{sec:topic2_preliminaries}
%%#################

%#################################
\subsection{Notation.}
\label{section:topic2_notation}
%#################################

Absolute constants that do not depend on any parameters of the problem are going to be denoted by $C,C',C_1$, etc as well as $c,c',c_1$, with the convention that capital $C$ stands for ``a sufficiently large absolute constant'' while the lower case $c$ is a synonym of ``a sufficiently small absolute constant''. It is assumed that $C$ and $c$ can denote different absolute constants in different parts of the expression. 
For $a,b\in \mb R$, let $a \vee b := \max \{a,b\}$ and $a \wedge b := \min \{a,b \}$.

Given a vector $v \in \mb{R}^{p}$, we denote its $\ell_1$ and $\ell_2$- norms via $\l\|v\r\|_1 := \sum_{i=1}^{p} |v_i|$ and $\l\|v\r\|_2 := \sqrt{ \sum_{i=1}^{p} |v_i|^2}$ respectively. If $\Sigma\in \mb R^{p\times p}$ is a symmetric positive-definite matrix, we define $\|v\|_\Sigma:=\dotp{\Sigma v}{v}^{1/2}$.  
Given two vectors $u \in \mb{R}^{n}$ and $v \in \mb{R}^{p}$, let $[u;v] \in \mb{R}^{
n} \times \mb{R}^{p}$ be the $(p + n)$-dimensional vector created by the vertical concatenation of $u$ and $v$. Let $\gamma_1\geq \gamma_2\geq\ldots\geq \gamma_p\geq 0$ be a non-increasing sequence. The corresponding sorted $\ell_1$ norm is defined as
\[
\l\|v\r\|_{\gamma} := \sum_{i=1}^p \gamma_i |v|_{(i)},
\]
where $|v|_{(i)}$ is the i-th largest coordinate of the vector $(|v|_1,\ldots,|v|_p)$; the fact that this is indeed a norm is established in \citep[][Proposition 1.2]{bogdan2015slope}.

Capital $S$ and $O$ will be reserved for the supports of vectors $\beta^*$ and $\theta^*$, the subsets of $\{1,\ldots,p\}$ and $\{1,\ldots,n\}$ respectively that contain the indices of non-zero coordinates of these vectors. We will also set $s = |S| := \card(S) $ and $o = |O| := \card(O)$. 

%Occasionally, we will use Landau's big-$\m O$ and small-$$ symbols, as well as their in-probability versions. Remaining notation will be introduced throughout the paper on demand. 

%#################################
%=======Sec: Topic 2 main=========
\section{Main results.}
\label{section:topic2_formulation}
%\label{section:topic2_main}
%#################################
%###############################
%\subsection{Problem formulation}
%\label{section:topic2_formulation}
%#################################

Recall that we observe $n$ random pairs of predictor-response values $\big(X_1, y_1\big), \ldots, \big(X_n, y_n\big) \in \mb{R}^{p} \times \mb{R}$ that are assumed to be generated according to the model
\begin{equation*}
y_i = X_i^T\beta^* +\sqrt{n} \theta_i^* + \sigma \xi_i,\qquad i=1,\ldots,n.
\end{equation*}
Alternatively,
\begin{equation}
\label{model}
Y = X\beta^* + \sqrt{n}\theta^* + \sigma \xi,
\end{equation}
where $X = [X_1, \ldots, X_n]^T$ is the $n \times p$ design matrix, $Y = \l(y_1, \ldots, y_n \r)^T$ is the response vector, $\xi = \l( \xi_1, \ldots, \xi_n \r)^T$ is the additive noise vector and $\theta^* = \l( \theta^\ast_1, \ldots,\theta^\ast_n\r)^T$ is the vector of adversarial outliers. 
\begin{remark}
\label{remark:scaling}
The $\sqrt{n}$ factor in front of $\theta^\ast$ is introduced for technical convenience: with this scaling, the columns of the augmented design matrix $[X \, | \,\sqrt{n} \,I_n]$, where $I_n$ is $n\times n$ identity matrix, are of similar length.
\end{remark}
\noindent Let us now define the robust pivotal Slope estimator. To this end, set
\[
Q(\beta,\theta) := \frac{1}{2n}\sum_{j=1}^{n} (y_j-X_j^T{\beta} -\sqrt{n}\theta_j)^2 = \frac{1}{2n}\l\|Y-X\beta-\sqrt{n}\theta\r\|_2^2,
\]
and let 
\begin{equation}
\label{equation:sqrt lasso loss}
L(\beta,\theta) = Q(\beta,\theta)^{\frac{1}{2}} + 
\l\|\beta\r\|_{\lambda} + \l\|\theta\r\|_{\mu}
\end{equation}
for some positive non-increasing sequences $\{\lambda_j\}_{j=1}^p, \ \{\mu\}_{j=1}^p$. The pivotal Slope estimator $\wh\beta$ of $\beta^\ast$ is then defined via the solution of a convex minimization problem
\begin{equation}
\label{estimator:sqrt lasso}
(\wh{\beta},\wh{\theta}) = \argmin_{\beta\in\mb{R}^p,\theta\in\mb{R}^n} L(\beta,\theta).
\end{equation}
The estimator in \eqref{estimator:sqrt lasso} can be seen as a generalization of the square-root Slope estimator (see \cite{stucky2017sharp,derumigny2018improved}). 
%where the positive regularization parameters $\lambda$ and $\mu$ encourage sparsity of the reconstructed vector $\wh{\beta}$ and $\wh{\theta}$. 
The idea of introducing the square root of the quadratic term $\sqrt{Q(\beta,\theta)}$, as opposed to $Q(\beta,\theta)$ itself, was originally developed in \cite{belloni2011square} for the Lasso estimator with the goal of removing the dependence of the regularization parameters on unknown $\sigma$, while retaining the convexity of the loss function. Note that estimator \eqref{estimator:sqrt lasso} is equivalent to the $\argmin$ over $\beta \in \mb{R}^{p}$, $\theta \in \mb{R}^{n}$ and $\sigma > 0$ of the loss function%The intuition for taking the square root of $Q(\beta, \theta)$ appears when one tries to estimate $\beta^*$, $\theta^*$ and the noise variance $\sigma^2$ simultaneously by minimizing 
\begin{equation}
\label{estimator:scaled lasso}
\wt{L}(\beta,\theta,\sigma) = \frac{Q(\beta, \theta)}{\sigma} + \sigma + \l\|\beta\r\|_{\lambda} + \l\|\theta\r\|_{\mu},
\end{equation}
provided that the minimum is attained at a positive value of $\sigma$ (see \cite[Chapter 3]{van2016estimation}). Indeed, the term $Q(\beta,\theta)^{\frac{1}{2}}$ in \eqref{equation:sqrt lasso loss} appears when one performs minimization of $\wt{L}(\beta,\theta,\sigma)$ %in \eqref{estimator:scaled lasso} 
with respect to $\sigma > 0$ first, and the optimal value of $\sigma$ can be viewed as an estimator of the unknown standard deviation of the noise. 
The couple $(\wh\beta,\wh\theta)$ can in turn be viewed as the usual square root Slope estimator for the vector $[\beta^\ast;\theta^\ast]$ of unknown regression coefficients corresponding the augmented design matrix $\l[ X; I_n\r]$. This is a natural approach since both $\beta^\ast$ and $\theta^\ast$ are sparse vectors. 

In the following subsections, we will show that the estimator defined in \eqref{estimator:sqrt lasso} achieves the optimal error bound under suitable choices of the sequences $(\lambda)_p$ and $(\mu)_n$. To this end, we will need the following assumptions: 
\begin{assumption}\label{ass:1} $\xi_i$'s are i.i.d. random variables with distribution $P_\xi$ in $\Pt$ for some $\tau\geq 2$ with $\E(\xi_i)=0$ and $Var(\xi_i) = 1$, where
	\[
%\Pt = \{ P_\xi \text{ such that } P_\xi(x:\, |x|\geq t):=\Prob(|\xi| \geq t) \leq (a/t)^\tau \text{ for all } t \geq 2a \}.
	\Pt = \{ P_\xi \text{ such that } \E(|\xi|^\tau)\leq a^\tau\}.
	%P_\xi(x:\, |x|\geq t):=\Prob(|\xi| \geq t) \leq (a/t)^\tau \text{ for all } t \geq 2a \}.
	\]
	When $\tau = \infty$, we instead impose the condition $\Prob(|\xi| \geq t) \leq 2\exp(-(t/a)^2)$. We will also assume that \emph{$\tau$ or its lower bound is known} and that $a$ is bounded by a sufficiently large numerical constant.
\end{assumption}
\begin{assumption}\label{ass:2} Assume that $\xi$ satisfies a ``small ball-type'' condition, namely $$\E(\xi^2_i \mathbf{1}\{|\xi_i|\leq 1/2\} ) \geq 1/4$$ (the specific choice of the constants $1/2$ and $1/4$ is not of essence).
\end{assumption}
\begin{assumption}\label{ass:3}
    For $i=1,\dots,n$ $X_i = \Sigma^{1/2}Y_i$ for some (unknown) matrix $\Sigma$ satisfying $\Sigma_{ii} \leq 1$ and $Y_1,\ldots,Y_n$ are i.i.d. centered $1$-sub-Gaussian random vectors such that $\E(Y_1Y_1^\top) = I_n$ . Here, ``$1$-sub-Gaussian'' means that $\mb E e^{\lambda \dotp{Y_1}{v}}\leq e^{\lambda^2\frac{\|v\|^2_2}{2}}$ for any $v\in \mb R^p$. 
\end{assumption}
\noindent We will also need to introduce the following objects:
\begin{itemize}
%\item $\beta^*\in\mb R^p$ is the vector of  true regression coefficients, with a support $S := \{j:\beta_j^* \neq 0\}$; %We denote $s:= \l\|\beta^*\r\|_0 = |S|$ to be the cardinality $S$.
%\item $\theta^* := \l(\theta_1^*, \ldots, \theta_n^* \r)^T \in \mb R^n$ is an arbitrary contamination vector (also referred to as ``vector of outliers''), with a support $O := \{j:\theta_j^* \neq 0\}$; %We denote $o := \l\|\theta^*\r\|_0 = |O|$ to be the cardinality of $O$.
	\item Define the cone
	\begin{equation}
	\label{def:dimension reduction cone original}
	\mathcal{C}(s,c_0) = \l\{ u \in \mb{R}^p: \l\|u\r\|_\lambda  \leq c_0 \sqrt{\sum_{i=1}^s \lambda^2_i}\l\|u\r\|_2\r\}.
	\end{equation}
	This is a set of vectors with $s$ largest coordinates that ``dominate'' the remaining ones, in a sense made precise above. We will assume that the covariance matrix $\Sigma$ satisfies the following version of the \emph{restricted eigenvalue condition}: for any $u \in \mathcal{C}(s,4)$, 
\[
\|u\|^2_\Sigma \geq \kappa(s) \|u\|^2_2.
\]
In particular, if $\Sigma$ is non-degenerate, then it is always true that
\[
\kappa(s) \geq \lambda_{\min} (\Sigma)>0.
\]
	\item We will also be interested in a similar cone in the augmented space $\mb{R}^{p} \times \mb{R}^{n}$ that is defined via
\mln{\label{def:dimension reduction cone}
\mathcal{C}(s,c_0,o,\delta,\Sigma) = \l\{(u,v)\in\mb{R}^p \times \mb{R}^n: \l\|u\r\|_\lambda  + \l\|v\r\|_\mu \r.
\\ 
\leq \l. c_0 \l(\sqrt{\frac{\sum_{i=1}^s \lambda^2_i}{\kappa(s)} +\frac{\log(1/\delta)}{n}}\l\|u\r\|_\Sigma + \sqrt{\sum_{i=1}^o \mu^2_i}\l\|v\r\|_2 \r)\r\}.
}
	\item Finally, define the sequence 
	\[
	\lambda_i = C\sqrt{\frac{\log(ep/i)}{n}}, \ i=1,\ldots,p,
	\]
	where $C$ is a sufficiently large absolute constant. We will use the ordered $\|\cdot\|_{\lambda}$ norm corresponding to this sequence.
\end{itemize}

%#################################
\subsection{Upper error bounds.}
\label{subsection:topic2_main}
%#################################

%Now we are ready to present the following theorem which provides the bound for $\l\| \wh{\beta} - \beta^* \r\|_2$ under ``favorable'' conditions.

Depending on the value of the parameter $\tau$ controlling the tails of the additive noise $\xi$, we will need to set the sequence $(\mu_n)_n$ differently (recall that $\tau$, or its lower bound, is assumed to be known). Specifically, let 
$$
\mu_i = \frac{C }{\sqrt{n}} \l(\frac{n}{i}\r)^{1/\tau}, \ i=1,\ldots,n
$$
and 
$$
\mu_i = C \sqrt{\frac{\log(en/i)}{n}}, \ i=1,\ldots, n
$$
for $\tau = \infty$.  Solution of the minimization problem \eqref{estimator:sqrt lasso} corresponding to this choice of penalization will be denoted via $\hat{\beta}_{\mathrm{sorted}}$.  
Similarly, given $0<\delta<1$, we denote by $\hat{\beta}_{\text{fixed}}$ the solution of \eqref{estimator:sqrt lasso} corresponding to 
$$
\mu_i = \frac{C }{\sqrt{n}} \l(\frac{n}{ \log(1/\delta)}\r)^{1/\tau},\ i=1,\ldots,n.
$$
%where $m = \log(1/\delta)$.
Observe that both estimators $\hat{\beta}_{\mathrm{sorted}}$ and $\hat{\beta}_{\text{fixed}}$ are fully adaptive, the only requirement being the prior knowledge of $\tau$. In addition, notice that $\hat{\beta}_{\text{fixed}}$ requires the desired confidence level $\delta$ as an input while $\hat{\beta}_{\mathrm{sorted}}$ does not. 
%=========Main theorem 1: deterministic bound===========

%========Thm: error bound in high prob========
\begin{theorem} 
\label{thm:sqrt lasso prob bound}
Assume that $\tau \geq  2$ and that assumptions \ref{ass:1}, \ref{ass:2} and \ref{ass:3} hold. There exist absolute positive constants $c,C'$ with the following properties: let $0<\delta<1$ be fixed, and assume that $s\log(p/s)/\kappa(s) + \log(1/\delta) + o \leq cn$. 
Then with probability at least $1-\delta$,
\mln{
\label{eq:result-1}
\|\wh{\beta}_{\text{fixed}} - \beta^*\|^2_\Sigma 
\leq C'\sigma^2 \l(\frac{s\log(ep/s) }{\kappa(s)n} \r.
\\
\l. + \l(\frac{o}{n}\r)^{2-2/\tau}\log(n/o) \l(1 + \l(\frac{o}{\log(1/\delta)}\r)^{2/\tau}\r) +\frac{\log(1/\delta)}{n} \r).
}
\end{theorem}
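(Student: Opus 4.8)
The plan is to establish \eqref{eq:result-1} on a single high-probability event by reducing the heavy-tailed problem to one with \emph{bounded} effective noise, at the price of a few extra sparse corruptions. Concretely, I would fix a truncation level $T \asymp (n/\log(1/\delta))^{1/\tau}$ (chosen so that $\sqrt n\,\mu \asymp T$ for the fixed penalty) and split $\xi = \xi^{\mathrm{tr}} + \xi^{\mathrm{tail}}$ with $\xi^{\mathrm{tr}}_i = \xi_i\mathbf 1\{|\xi_i|\le T\}$. Using $\E|\xi|^\tau \le a^\tau$ and a Chernoff bound for the binomial count $\#\{i:|\xi_i|>T\}$, I would show that with probability at least $1-\delta/3$ the tail part is supported on a set of cardinality at most $C\log(1/\delta)$. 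Its values may be arbitrarily large, so I fold them into the adversarial component: set $\tilde\theta^* = \theta^* + \frac{\sigma}{\sqrt n}\xi^{\mathrm{tail}}$, which is $\tilde o$-sparse with $\tilde o \le o + C\log(1/\delta)$, and regard $\sigma\xi^{\mathrm{tr}}$ as the new noise. Since $Y = X\beta^* + \sqrt n\,\tilde\theta^* + \sigma\xi^{\mathrm{tr}}$, this re-decomposition leaves the minimizer $(\wh\beta,\wh\theta)$ untouched.

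Next I would exploit convexity of $L$. Writing $u = \wh\beta - \beta^*$, $v = \wh\theta - \tilde\theta^*$ and $\hat r = Y - X\wh\beta - \sqrt n\wh\theta$, the first-order optimality condition produces subgradients $g_\beta\in\partial\|\wh\beta\|_\lambda$, $g_\theta\in\partial\|\wh\theta\|_\mu$ with $\frac{\nabla Q}{2\sqrt{Q(\wh\beta,\wh\theta)}} = -(g_\beta;g_\theta)$; combining this with the inequality $L(\wh\beta,\wh\theta)\le L(\beta^*,\tilde\theta^*)$ gives the basic inequality, bounding the sorted-norm increments by the inner products of the normalized noise $\sigma\xi^{\mathrm{tr}}$ with $Xu$ and with $\sqrt n\,v$. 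Decomposing the sorted $\ell_1$ norms into their actions on $S$, $O$ and the complements then shows that, on the event where these noise terms are dominated by the penalties, $(u,v)$ lies in the cone $\mathcal C(s,4,\tilde o,\delta)$ of \eqref{def:dimension reduction cone}. A point specific to the square-root loss is controlling the empirical noise level $\sqrt{Q(\wh\beta,\wh\theta)}$ from above and below by a constant multiple of $\sigma$, which keeps the KKT normalization stable; I would obtain this from the small-ball condition $\E(\xi^2\mathbf 1\{|\xi|\le 1/2\})\ge 1/4$ together with a concentration bound for $\|\xi^{\mathrm{tr}}\|_2^2$.

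The core probabilistic step is to verify, with probability $1-\delta/3$, two families of estimates. First, the Slope-dual control of the design--noise interaction, $\frac1n\langle \xi^{\mathrm{tr}}, Xu\rangle \lesssim \sigma\,\|u\|_\lambda$ uniformly in $u$, which follows from sub-Gaussianity of the rows of $X$ and a union bound over sorted coordinates as in the original Slope analysis; here Bernstein's inequality for the bounded variables $\xi^{\mathrm{tr}}$ (with $|\xi^{\mathrm{tr}}_i|\le T$ and variance $\le 1$) is what supplies the sub-Gaussian, $\log(1/\delta)$-type deviations at scale $\sigma$ that make the reduction in the first step pay off. Second, a restricted-eigenvalue statement for the augmented design $[X;\sqrt n\,I_n]$ on the cone, passing from $\frac1n\|Xu + \sqrt n\,v\|_2^2$ to $\kappa(s)\|u\|_2^2 + \|v\|_2^2$ up to constants and lower-order terms of order $\log(1/\delta)/n$; this is precisely where the properties of matrices with sub-Gaussian rows promised in the abstract enter, and the sample-size assumption $s\log(p/s)/\kappa(s)+\log(1/\delta)+o\le cn$ guarantees that the cone is low-dimensional enough for it to hold.

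Finally, combining the basic inequality with the restricted-eigenvalue lower bound and the dual-norm controls yields a quadratic inequality in $\|u\|_\Sigma$ and $\|v\|_2$, whose resolution gives $\|u\|_\Sigma^2 \lesssim \sigma^2\big(\sum_{i\le s}\lambda_i^2/\kappa(s) + (\text{outlier term in }\tilde o,\mu) + \log(1/\delta)/n\big)$. Substituting $\tilde o \le o + C\log(1/\delta)$ and the fixed value $\mu \asymp \frac{1}{\sqrt n}(n/\log(1/\delta))^{1/\tau}$, and simplifying $\sum_{i\le s}\lambda_i^2 \asymp s\log(ep/s)/n$ together with the outlier term into $(o/n)^{2-2/\tau}\log(n/o)\,(1+(o/\log(1/\delta))^{2/\tau})$, produces \eqref{eq:result-1}. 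I expect the main obstacle to be the stochastic control in the third step: obtaining the restricted-eigenvalue and dual-norm bounds for the augmented design uniformly over the cone while the noise is only heavy-tailed, and with sub-Gaussian rather than polynomial dependence on $\delta$. The truncation of the first step is the device that enables this, but tracking how the resulting $\log(1/\delta)$ extra outliers propagate through the sorted-norm cone geometry, and verifying the final algebraic simplification of the outlier term, is the most delicate part of the argument.
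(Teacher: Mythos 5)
Your overall strategy is the same as the paper's: reassign the largest $\sim\log(1/\delta)$ noise coordinates to the adversarial component (the paper removes the top $o'=o+\log(1/\delta)$ order statistics and works on the event $\mathcal E$ where the rest satisfy $\xi_{(j)}^2\le n\mu_j^2$ and $\|\xi\|_2^2\asymp n$, which is your truncation in different clothing), establish cone membership from the basic inequality, verify restricted-eigenvalue and incoherence properties for the sub-Gaussian design, and resolve a quadratic inequality. Up to that point your plan is sound.

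There is, however, a genuine gap at the final step. The quadratic inequality in $\|u\|_\Sigma$ and $\|v\|_2$ that comes out of the basic inequality plus the augmented restricted-eigenvalue bound has the form
\begin{equation}
\|u\|_\Sigma^2+\|v\|_2^2 \;\lesssim\; \sigma\Bigl(\sqrt{\tfrac{\sum_{i\le s}\lambda_i^2}{\kappa(s)}+\tfrac{\log(1/\delta)}{n}}\,\|u\|_\Sigma+\sqrt{\textstyle\sum_{i\le \tilde o}\mu_i^2}\,\|v\|_2\Bigr),
\end{equation}
and its resolution gives $\|u\|_\Sigma^2\lesssim \sigma^2\bigl(\sum_{i\le s}\lambda_i^2/\kappa(s)+\log(1/\delta)/n+\sum_{i\le\tilde o}\mu_i^2\bigr)$. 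With the fixed choice of $\mu$ the outlier contribution is $\sum_{i\le\tilde o}\mu_i^2\asymp(\tilde o/n)^{1-2/\tau}(\tilde o/\log(1/\delta))^{2/\tau}$, which is \emph{linear} in the contamination fraction and strictly weaker than the claimed $(o/n)^{2-2/\tau}\log(n/o)\bigl(1+(o/\log(1/\delta))^{2/\tau}\bigr)$ (for $\tau=\infty$ it gives $o/n$ instead of $(o/n)^2\log(n/o)$). No algebraic simplification of $\tilde o$ and $\mu$ recovers the theorem from this intermediate bound. The paper needs a second, decoupling stage: keep the first-stage bounds on $\|\Delta^\theta\|_2$ and $\|\Delta^\theta\|_\mu$, then write the KKT condition in $\beta$ alone (treating $\theta^*$ as a nuisance), use the incoherence property \eqref{eq:property2} to control $\langle X\Delta^\beta/\sqrt n,\Delta^\theta\rangle$ by $\|\Delta^\beta\|_\lambda\|\Delta^\theta\|_2/10+\|\Delta^\theta\|_\lambda\|\Delta^\beta\|_\Sigma/10+\dots$, and bound $\|\Delta^\theta\|_\lambda^2\lesssim \max_{j\ge o'}(\lambda_j^2/\mu_j^2)\bigl(\|\Delta^\theta\|_\mu^2+(\sum_{j\le o'}\mu_j^2)^2\bigr)+\sum_{j\le o'}\lambda_j^2$. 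This is where the squared dependence $\max_{j\ge o'}(\lambda_j^2/\mu_j^2)(\sum_{j\le o'}\mu_j^2)^2$ in Theorem \ref{thm: sqrt lasso bound}, and hence the rate in \eqref{eq:result-1}, actually comes from; your outline omits this mechanism entirely.
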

%============================================
%\begin{remark}
%\label{remark for thm 3.2.2}
It follows from Theorem \ref{thm:lower} stated below that the bound \eqref{eq:result-1} is minimax optimal with respect to the contamination proportion $\frac{o}{n}$, up to the logarithmic factors, as long as $\log(1/\delta) \geq o$. Note that similar types of  conditions have appeared in the context of robust regression for methods based on the median of means estimator \cite{lecue2020robust}. In general, the condition $\log(1/\delta) \geq o$ is also required for robust mean estimation for instance using the trimmed mean \cite{lugosi2019robust} or self-normalized sums \cite{minsker2021robust}.
Finally, observe that \eqref{eq:result-1} is meaningful, although sub-optimal, even when $o\gg \log(1/\delta)$. 
%\end{remark}

\begin{theorem} 
\label{thm:sqrt lasso prob bound 2}
Assume that $\tau > 2$ and that assumptions \ref{ass:1},\ref{ass:2}  and \ref{ass:3} hold. There exist absolute positive constants $c,C'$ with the following properties: for any $\delta$ such that $s\log(p/s)/\kappa(s) + \log(1/\delta)+ o \leq cn$, the inequality 
\mln{
\label{eq:result-2}
\|\wh{\beta}_{\mathrm{sorted}} - \beta^*\|^2_\Sigma  \leq C'\sigma^2 \l(\frac{s\log(ep/s) }{\kappa(s)n} \r.
\\
\l.+ \l(\frac{o}{n}\r)^{2-4/\tau} + \l(\frac{\log(1/\delta)}{n}\r)^{2-4/\tau} + \frac{\log(1/\delta)}{n} \r)
}
holds with probability at least $1-\delta$. In particular, when the noise $\xi$ has sub-Gaussian distribution,
\[
\|\wh{\beta}_{\mathrm{sorted}} - \beta^*\|^2_\Sigma  \leq C'\sigma^2 \l(\frac{s\log(ep/s) }{\kappa(s)n} + \l(\frac{o\log(n/o)}{n} \r)^2 + \frac{\log(1/\delta)}{n} \r).
\]
\end{theorem}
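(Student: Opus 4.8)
The plan is to treat $(\wh\beta_{\mathrm{sorted}},\wh\theta)$ as the square-root Slope estimator for the augmented linear model with design $[X,\sqrt{n}\,I_n]$, unknown coefficient vector $[\beta^\ast;\theta^\ast]$, and block-separable sorted penalty $\|\beta\|_\lambda+\|\theta\|_\mu$, and to follow the pivotal scheme while paying special attention to the calibration of $\mu$ against the order statistics of the heavy-tailed noise; the deterministic skeleton is common to Theorem \ref{thm:sqrt lasso prob bound}, and the proof diverges only in the stochastic control and the final rate computation. Write $u=\wh\beta_{\mathrm{sorted}}-\beta^\ast$, $v=\wh\theta-\theta^\ast$ and $G=Xu+\sqrt{n}\,v$, so that the residual at the estimator equals $\sigma\xi-G$. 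Starting from optimality, $L(\wh\beta_{\mathrm{sorted}},\wh\theta)\le L(\beta^\ast,\theta^\ast)$, and the identity $\|\sigma\xi-G\|_2-\sigma\|\xi\|_2=(\|G\|_2^2-2\sigma\dotp{\xi}{G})/(\|\sigma\xi-G\|_2+\sigma\|\xi\|_2)$, I would obtain a basic inequality controlling $\|G\|_2^2$ by the penalty differences plus the linear noise term $\dotp{\xi}{G}/\|\xi\|_2$. The square-root structure enters here: under the moment and small-ball assumptions one first shows $\|\xi\|_2\asymp\sqrt{n}$ and that the estimated scale $\wh\sigma:=\sqrt{2Q(\wh\beta_{\mathrm{sorted}},\wh\theta)}$ satisfies $\wh\sigma\asymp\sigma$, so the denominator above is $\asymp\sigma\sqrt{n}$ and the right-hand side becomes free of $\sigma$. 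The hypothesis $s\log(p/s)/\kappa(s)+\log(1/\delta)+o\le cn$ is what secures these pivotal estimates together with the restricted-eigenvalue step below.

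The decisive and genuinely new step is the stochastic control of $\dotp{\xi}{G}=\dotp{X^T\xi}{u}+\sqrt{n}\,\dotp{\xi}{v}$, for which I would invoke duality of the sorted $\ell_1$ norm: on an event of probability at least $1-\delta/2$ it suffices that the partial sums of the order statistics of $X^T\xi$ be dominated by those of $\tfrac12\sqrt{2n}\,\|\xi\|_2\,\lambda$, and the partial sums of $|\xi|$ by those of $\tfrac12\sqrt{2}\,\|\xi\|_2\,\mu$. The design part follows from sub-Gaussianity of the rows and the earlier results on matrices with sub-Gaussian rows, giving $|X^T\xi|_{(k)}\lesssim\|\xi\|_2\sqrt{\log(ep/k)}$, which matches $\lambda_k$. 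The outlier part is the heart of the matter: since $\E|\xi|^\tau\le a^\tau$ forces $|\xi|_{(k)}\approx(n/k)^{1/\tau}$, the sequence $\mu_i\propto\tfrac1{\sqrt n}(n/i)^{1/\tau}$ is exactly the envelope of these order statistics, and I must establish the uniform estimate $\sum_{i=1}^k|\xi|_{(i)}\lesssim\sqrt{n}\sum_{i=1}^k\mu_i$ for all $k$ simultaneously. I would prove this by truncation separating the bulk from the heavy tail, combined with a union bound over dyadic ranges of $k$; the price of confidence $1-\delta$ in the tail inflates these partial sums in a manner that, with $\log(1/\delta)$ in the role of $o$, produces the deviation summand $(\log(1/\delta)/n)^{2-4/\tau}+\log(1/\delta)/n$. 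This uniform heavy-tailed order-statistics bound is the step I expect to be the main obstacle.

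With the noise controlled, the basic inequality forces $(u,v)$ into the cone $\m C(s,c_0,o,\delta)$ of \eqref{def:dimension reduction cone}: splitting each sorted norm across the supports $S$ and $O$ and their complements, the off-support pieces are absorbed, leaving $\|u\|_\lambda+\|v\|_\mu$ bounded by the right-hand side of the cone inequality. On this cone I would combine the restricted-eigenvalue condition for $\Sigma$ with the augmented-design estimates to obtain $\|G\|_2^2\gtrsim n\,\kappa(s)\|u\|_\Sigma^2$, the point being that $o\le cn$ prevents the deletion of the $o$ coordinates carrying $\sqrt{n}\,v$ from destroying the curvature contributed by $X$.

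Finally, combining $\|G\|_2^2\gtrsim n\kappa(s)\|u\|_\Sigma^2$ with the pivotal upper bound $\|G\|_2^2\lesssim\sigma n(\|u\|_\lambda+\|v\|_\mu)$ and the cone inequality yields a self-bounding inequality in $\|u\|_\Sigma$ and $\|v\|_2$. Solving it — the delicate point being to exploit the sparsity of $\sqrt{n}\,v$ and its weak correlation with the column space of $X$ to show that the outlier error influences $\|u\|_\Sigma$ only at second order — produces the stated bound: the term $\sum_{i=1}^s\lambda_i^2/\kappa(s)$ gives the sparsity contribution $s\log(ep/s)/(\kappa(s)n)$, while the outlier contribution enters through $\l(\sum_{i=1}^o\mu_i^2\r)^2$. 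For $\tau>2$ one has $\sum_{i=1}^o\mu_i^2\asymp(o/n)^{1-2/\tau}$, convergent at this rate precisely because $2/\tau<1$, so its square is $(o/n)^{2-4/\tau}$, and the tail deviation supplies the remaining summands. The sub-Gaussian case is the specialization $\tau=\infty$, for which $\mu_i\asymp\sqrt{\log(en/i)/n}$ and $\sum_{i=1}^o\mu_i^2\asymp o\log(n/o)/n$, whose square is $\l(o\log(n/o)/n\r)^2$.
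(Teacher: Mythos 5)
Your proposal follows essentially the same route as the paper: reduce to the augmented square-root Slope with block penalty, establish the pivotal scale estimates $\|\xi\|_2\asymp\sqrt n$ and $\wh\sigma\asymp\sigma$, force the error into the cone \eqref{def:dimension reduction cone}, apply restricted eigenvalue and incoherence bounds for the augmented design, control the heavy-tailed noise by treating its top $o'=o+\log(1/\delta)$ order statistics as extra outliers calibrated against $\mu_i\propto n^{-1/2}(n/i)^{1/\tau}$, and extract the second-order outlier contribution $\bigl(\sum_{i\le o'}\mu_i^2\bigr)^2$ from the weak correlation between $X$ and $I_n$ — all of which matches the paper's Theorem \ref{thm: sqrt lasso bound} and its specialization. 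The only cosmetic difference is that you phrase the noise control through partial sums of order statistics (the dual of the sorted norm) where the paper uses the pointwise bound $|\xi|_{(j)}\le\sqrt n\,\mu_j$ for $j\ge o'$; the final rate computation $\sum_{i\le o'}\mu_i^2\asymp(o'/n)^{1-2/\tau}$ and its sub-Gaussian analogue are exactly the paper's.
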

%============================================

For $\tau = \infty$, the bound implied by the inequality \eqref{eq:result-2} is minimax optimal up to the logarithmic factors. However, it is sub-optimal for $\tau<\infty$. Interestingly, the bound holds uniformly over the range confidence levels $e^{-cn}<\delta <1$ for the fixed choice of the regularization sequences $\{\lambda_i\}$ and $\{\mu_i\}$. 
In the special case where $o=0$ (no adversarial corruption) and $\tau \geq 4$, inequality \eqref{eq:result-2} yields a sub-Gaussian deviation bound with optimal dependence on the sparsity level $s$, despite the fact that the noise can be heavy-tailed.

	\begin{comment}

Finally, let us mention that Theorems \ref{thm:sqrt lasso prob bound} and \ref{thm:sqrt lasso prob bound 2} also hold in the fixed design setting, where $\|\wh{\beta} - \beta^*\|^2_\Sigma$ is replaced by $\frac{\|X\l(\wh{\beta} - \beta^*\r)\|^2_2}{n}$ for the given realization of the design matrix $X$. Our proofs include the remarks outlining the changes to the arguments necessary to obtain these modified inequalities. 

	\end{comment}
%This follows from the strong concentration guarantees satisfied by sub-Gaussian distributions.
%however,we believe that our choice of the norm is better suited for the random design framemork.
%For $\tau = \infty$ (sub-Gaussian case), our bound is minimax optimal (up to logarithmic factors) and fully sub-Gaussian.

%============================================
%======end Section:main results===================

%===================================================
%======end Section:case of gaussian design=============

%#################################
%======Section:topic_2 conclusion and future work============
\subsection{Lower error bounds.}
\label{sec:lower bound}
%#################################

It is well known \cite[e.g. see][]{bellec2018slope} that in the absence of adversarial contamination and with $\Sigma = I_p$,
\[
\underset{\hat \beta}{\inf}\underset{|\beta|_0 \leq s}{\sup} \Prob\l(\|X(\hat{\beta} - \beta)\|_2^2/n \geq C' \sigma^2 \frac{s\log(ep/s)}{n}\r) \geq c
\]
for some positive constants $c,C'$.
%where the infimum is taken over all estimators $\hat\beta$. 
In the Huber's contamination framework coupled with the assumption that the additive noise $\xi$ is Gaussian, results in \cite{chen2016general} yield that no estimator can achieve the error smaller than $C' \sigma^2 \l(\frac{s\log(ep/s)}{n} + \l(\frac{o}{n}\r)^2\r)$; of course, this lower bound is also valid for the adversarial contamination model. However, we could not find readily available lower bounds for the noise distributions beyond Gaussian. The following result gives an answer in this case. 
%shows that the rate $(o/n)^{2-2/\tau}$ is minimax optimal.
\begin{theorem}
\label{thm:lower}
Assume that at least one of the columns of $X$ belongs to $\{-1,1\}^n$. Then
\[
\underset{\hat \beta}{\inf}\,\underset{|\beta|_0 \leq s}{\sup}\,\underset{|\theta|_0 \leq o}{\sup}\,\underset{\sigma>0}{\sup}\, \underset{P_\xi \in \m P_{\tau,1}}{\sup} \,\Prob_{(\beta,\theta,\sigma,P_\xi)}\l(\|X(\hat{\beta} - \beta)\|^2/n \geq C \sigma^2 \l(\frac{o}{n}\r)^{2-2/\tau}\r) \geq c,
\]
for some $C,c>0$ where the infimum is taken over all measurable estimators. For sub-Gaussian noise the inequality takes the form
\[
\underset{\hat \beta}{\inf}\,\underset{|\beta|_0 \leq s}{\sup}\, \underset{|\theta|_0 \leq o}{\sup}\,\underset{\sigma>0}{\sup}\, \underset{P_\xi \in \m P_{\infty,1}}{\sup} \,\Prob_{(\beta,\theta,\sigma,P_\xi)}\l(\|X(\hat{\beta} - \beta)\|^2/n \geq C \sigma^2 \l(\frac{o}{n}\r)^{2}\log(n/o)\r) \geq c.
\]
\end{theorem}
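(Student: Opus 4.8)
The plan is to use Le Cam's two-point method, reducing the problem to a scalar robust-location problem carried by the distinguished $\{-1,1\}$-valued column. Let $v = Xe_k \in \{-1,1\}^n$ be such a column, fix any $\sigma>0$, and set $\beta^{(0)} = 0$ and $\beta^{(1)} = \gamma e_k$, so that $X(\beta^{(1)} - \beta^{(0)}) = \gamma v$ and hence $\|X(\beta^{(1)} - \beta^{(0)})\|_2^2/n = \gamma^2$ because $\|v\|_2^2 = n$. It therefore suffices to exhibit, for each hypothesis $j \in \{0,1\}$, a noise law $P_\xi^{(j)} \in \m P_{\tau,1}$ and an outlier vector $\theta^{(j)}$ with $\|\theta^{(j)}\|_0 \leq o$ such that the two resulting laws of $Y$ are statistically indistinguishable, with separation $\gamma^2 \asymp \sigma^2 (o/n)^{2-2/\tau}$ in the heavy-tailed regime and $\gamma^2 \asymp \sigma^2 (o/n)^2\log(n/o)$ when $\tau = \infty$. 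Once the two laws of $Y$ have total variation distance bounded away from $1$, the two-point lemma gives $\inf_{\hat\beta}\max_{j}\Prob_{H_j}\l(\|X(\hat\beta - \beta^{(j)})\|_2^2/n \geq \gamma^2/4\r) \geq c$, which is the claim.

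\textbf{The matching construction.} Write $\epsilon = o/n$. First I would construct two centered, unit-variance laws $P^{(0)}, P^{(1)} \in \m P_{\tau,1}$ whose $\sigma$-rescalings overlap in a $(1-\epsilon)$-fraction of their mass after the shift $\gamma$: concretely, arrange that the laws of $\sigma\xi$ (for $\xi\sim P^{(0)}$) and of $\gamma+\sigma\xi$ (for $\xi\sim P^{(1)}$) admit decompositions $(1-\epsilon)\,\mu_{\mathrm{core}} + \epsilon\,\rho_0$ and $(1-\epsilon)\,\mu_{\mathrm{core}} + \epsilon\,\rho_1$ with a common core $\mu_{\mathrm{core}}$. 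This is the modulus-of-continuity construction: the whole mean gap $\gamma$ is produced by displacing only the $\epsilon$-mass, so $\gamma \asymp \epsilon\cdot(\text{distance } c \text{ to which the outlying mass may be pushed})$. The moment budget caps this distance, since a tail atom of weight $\epsilon$ at $\pm c$ contributes $\epsilon c^\tau$ to the $\tau$-th moment; this forces $c \asymp \sigma\epsilon^{-1/\tau}$ and hence $\gamma \asymp \sigma\epsilon^{1-1/\tau}$, whereas in the sub-Gaussian case the constraint $\Prob(|\xi|\geq c)\leq 2e^{-(c/a)^2}$ instead permits $c \asymp \sigma\sqrt{\log(1/\epsilon)}$, yielding $\gamma \asymp \sigma\epsilon\sqrt{\log(1/\epsilon)}$. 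One then checks that both $P^{(0)},P^{(1)}$ can be completed, by choosing the core appropriately, to have mean zero, variance exactly one, and the required moment (resp.\ sub-Gaussian) bound; the coordinatewise sign of $v_i$ is absorbed by reflecting the outlying atom, using symmetry of the core.

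\textbf{Realizing the hard budget.} Under $H_0$ I take $\theta^{(0)} = 0$, so $Y$ has law $(\sigma P^{(0)})^{\otimes n}$ exactly. Under $H_1$ the clean responses are $\gamma v_i + \sigma\xi_i^{(1)}$; because $P^{(1)}$ is built as the mixture above, the adversary, who observes the latent mixture indicators, corrupts precisely the coordinates drawn from the $\rho_1$-component and overwrites them with $\rho_0$-draws, so that each coordinate attains the common law $\bar P := (1-\epsilon)\mu_{\mathrm{core}} + \epsilon\rho_0 = \sigma P^{(0)}$. The number of corrupted coordinates is $\mathrm{Binomial}(n,\epsilon)$ with mean $o$; on the event that it does not exceed $o$ the law of $Y$ under $H_1$ equals $\bar P^{\otimes n}$, identical to that under $H_0$. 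Coupling the two experiments on this event and bounding its complement by a Chernoff estimate controls $\mathrm{TV}(\mathrm{law}_0,\mathrm{law}_1) \leq \Prob(\mathrm{Bin}(n,\epsilon) > o)$, which stays bounded away from $1$ uniformly in $o\geq 1$ (and can be made small by shrinking $\epsilon$ by a constant factor); substituting into the two-point lemma completes the proof.

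\textbf{Main obstacle.} The crux is the simultaneous-constraints construction of $P^{(0)},P^{(1)}$: maximizing the mean gap produced by a shared $(1-\epsilon)$-overlap while keeping both laws centered with unit variance and $\tau$-th moment at most one is exactly the optimization that converts the Gaussian exponent $1$ into $1-1/\tau$ (and produces the $\log(n/o)$ factor when $\tau=\infty$), so all the sharpness of the bound lives there. A secondary, more technical nuisance is reconciling the random $\mathrm{Binomial}(n,\epsilon)$ contamination count with the deterministic budget $\|\theta\|_0\leq o$ and handling the coordinatewise signs $v_i=\pm1$; I expect both to cost only absolute constants.
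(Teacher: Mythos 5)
Your proposal is correct and rests on the same core mechanism as the paper's proof: a two-point argument along the distinguished $\pm 1$ column, in which the whole mean shift $\gamma\asymp\sigma(o/n)^{1-1/\tau}$ (resp.\ $\sigma(o/n)\sqrt{\log(n/o)}$ in the sub-Gaussian case) is carried by an $(o/n)$-fraction of probability mass displaced to distance $\asymp\sigma(o/n)^{-1/\tau}$, the displacement being capped by the $\tau$-th moment (resp.\ sub-Gaussian tail) budget. The bookkeeping, however, is dual to the paper's. The paper keeps Rademacher noise under the alternative and randomizes the contamination, $\theta_i=\sigma(o/n)^{-1/\tau}\alpha_i v_i$ with $\alpha_i\sim\mathrm{Bern}(o/n)$, choosing $\beta$ so that $X\beta=\E\theta$ exactly; the residual $-(\theta-\E\theta)+\sigma\xi$ is then re-declared to be pure noise $\tilde\sigma\tilde\xi$ under the null, so the two laws of $Y$ coincide \emph{exactly} and the testing error is $1/2$ with no total-variation computation. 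The price is having to verify $\tilde P_\xi\in\m P_{\tau,1}$ and to repair the fact that the random $\theta$ is only $o$-sparse on average (the paper defers this to \cite{comminges2021adaptive}). You instead build the $(1-\epsilon)$-overlap into the two noise laws and let the adversary erase the distinguishing $\epsilon$-component, which enforces $\|\theta\|_0\leq o$ by construction but yields equality of the two laws only up to a $\mathrm{Bin}(n,\epsilon)$ tail event, handled by Chernoff after shrinking $\epsilon$ by a constant. Two shared caveats, neither specific to your route: the class $\m P_{\tau,1}$ with unit variance and $\E|\xi|^\tau\leq 1$ is, strictly speaking, only satisfied by the Rademacher law (both proofs really use the tail-bound form of the constraint, as the paper itself does when checking $\tilde P_\xi$), and both constructions need the adversary to act on latent Bernoulli/mixture indicators, which is legitimate since a randomized contamination prior lower-bounds the supremum over deterministic $\theta$.
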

The assumption on the design is very mild: indeed, it suffices that $\bf 1_n$ is a column of $X$, which is equivalent to including the intercept term in the regression. Another special case is the Rademacher design, implying that the lower bound holds for the class of sub-Gaussian design matrices.
%One simple way to get around this assumption is by choosing a random design of Bernoulli entries. Our upper bounds holds for any sub-Gaussian design and in particular Bernoulli designs. Taking any column vectors of fixed signs can replace the vector of ones in our proof \dots
%Observe that by taking $X = \bf 1_n$ and $p=1$ we get a lower bound for estimation of a signal parameter under the mean model (add ref) \dots

%#################################
\subsection{Main ideas of the proofs.}
\label{subsection:topic3_main}
%\subsection{Useful properties of sub-Gaussian designs}
%#################################

In this section, we give a brief summary of the key ideas used in the proofs of Theorems \ref{thm:sqrt lasso prob bound} and \ref{thm:sqrt lasso prob bound 2}. We start by discussing several useful properties of sub-Gaussian design vectors. 

\begin{itemize}
	\item We show that the design matrix $X$ acts as a near-isometry on approximately sparse vectors: indeed, conditions of these type are crucial to guarantee success of sparse recovery. A detailed overview of similar assumptions that appear in the literature can be found in \cite{van2009conditions}. The specific form of the inequality that we prove is the following: with high probability, for all $u\in \mb R^p$ simultaneously
    \ben{
    \label{eq:property1}
    \frac{\|Xu\|^2_2}{n}\geq \frac{1}{2} \|u\|_\Sigma^2 - \|u\|^2_\lambda/4.
    }
%The design matrix $X$ needs to satisfy some type of \textit{invertibility condition}. Indeed, even in the simplest case when $o=0$, i.e. there is no contamination, \cite{belloni2011square} suggest that $X$ has to satisfy a ``restricted invertibility'' condition (such as the restricted isometry, restricted eigenvalue or compatibility condition, see, e.g. \cite{van2009conditions}) in order to guarantee the optimality of the estimator in \eqref{estimator:sqrt lasso}.
	\item The columns of the design matrix $X$ need to be ``nearly uncorrelated'' with the columns of the identity matrix $I_n$. Indeed, consider a particular case where $n=p$ and $X=\sqrt{n} I_n$. In such a case, the model \eqref{model} becomes
	\[
	y_i = \sqrt{n} \l( \beta^* + \theta^*\r) + \sigma \xi_i,
	\]
	whence the only identifiable vector is $\beta^* + \theta^*$, making it impossible to consistently estimate $\beta^*$ itself. Assumptions of this type are commonly referred to as the \textit{incoherence conditions}. The incoherence employed in our proof takes the following form:  with probability at least $1-\delta$, for all $u,v\in \mb R^p$ simultaneously and some absolute constant $C'$,
   \ben{
       \label{eq:property2}
   \frac{1}{\sqrt{n}} |v^\top X u| \leq \|u\|_\lambda \|v\|_2/10 + \|v\|_\lambda \|u\|_\Sigma/10 + C' \sqrt{\frac{\log(1/\delta) +1}{n}}\|u\|_\Sigma\|v\|_2.
    }
Similarly,  for any fixed $v\in \mb R^n$, the following inequality holds with probability at least $1-\delta$ uniformly over all  $u\in \mb R^p$:
   \ben{
       \label{eq:property3}
   \frac{1}{\sqrt{n}} |v^\top X u| \leq \|u\|_\lambda \|v\|_2/10 + C' \sqrt{\frac{\log(1/\delta) +1}{n}}\|u\|_\Sigma\|v\|_2.
    }
In \cite{dalalyan2019outlier}, authors establish very similar conditions  for Gaussian design matrices.
\end{itemize}

We summarize the important properties of sub-Gaussian design matrices in the following result.			
\begin{theorem}
\label{thm:sub-gaussian}
Assume that $X = Y\Sigma^{1/2}$, where $Y$ has independent $1$-sub-Gaussian rows. Then, with probability at least $1-e^{-cn}$, $X$ satisfies \eqref{eq:property1}, and with probability at least $1-\delta$, $X$ satisfies \eqref{eq:property2} and \eqref{eq:property3}.
\end{theorem}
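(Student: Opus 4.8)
The plan is to establish the three displayed inequalities separately, since they hold at different confidence levels: \eqref{eq:property1} is a one-sided restricted-isometry bound that should hold with probability $1-e^{-cn}$, while \eqref{eq:property2} and \eqref{eq:property3} are incoherence bounds whose fluctuation terms must carry the explicit $\log(1/\delta)$ dependence. Two structural facts drive everything. First, $\dotp{X_i}{u} = \dotp{Y_i}{\Sigma^{1/2}u}$ is sub-Gaussian with variance $\|u\|_\Sigma^2$, so the rows see a vector $u$ only through its $\Sigma$-geometry. Second, the weights $\lambda_i = C\sqrt{\log(ep/i)/n}$ are calibrated so that the dual sorted-$\ell_1$ norm of a sub-Gaussian vector is small: if $z\in\mb R^p$ has coordinatewise sub-Gaussian parameter $\sigma_0$, then $\E|z|_{(i)}\lesssim\sigma_0\sqrt{\log(ep/i)}$, and since $\|z\|_{\lambda^*} = \max_{k}\big(\sum_{i\le k}|z|_{(i)}\big)/\big(\sum_{i\le k}\lambda_i\big)$, a by-now standard estimate (controlling $\sum_{i\le k}|z|_{(i)}$ uniformly in $k$) yields $\E\|z\|_{\lambda^*}\lesssim\sigma_0\sqrt n/C$, which is as small as desired once $C$ is large. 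This single bound is the engine behind every ``$\|u\|_\lambda$'' term below.

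For \eqref{eq:property1} I would use Mendelson's small-ball method. By homogeneity the claim reduces to $\sqrt{n^{-1}\|Xu\|_2^2}\ge c_1\|u\|_\Sigma - c_2\|u\|_\lambda$ holding uniformly. The small-ball probability $\Prob(|\dotp{X_i}{u}|\ge\kappa\|u\|_\Sigma)\ge\zeta$ follows from Paley--Zygmund and the fourth-moment bound $\E\dotp{Y_i}{w}^4\lesssim\|w\|_2^4$. The multiplier (Rademacher) term is $\E\sup\dotp{\tfrac1n\sum_i\eps_i X_i}{u}$ over the localized set $\{\|u\|_\Sigma\le1,\ \|u\|_\lambda\le\rho\}$, which by duality is at most $\rho\,\E\|\tfrac1n\sum_i\eps_i X_i\|_{\lambda^*}\lesssim\rho/C$ by the calibration above; a bounded-differences concentration for the bounded indicator process upgrades this to hold with probability $1-e^{-cn}$, and a dyadic peeling over $\rho=\|u\|_\lambda/\|u\|_\Sigma$ gives the uniform bound. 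Squaring and using $(a-b)_+^2\ge\tfrac{c_1^2}{2}a^2 - c_2^2 b^2$ produces \eqref{eq:property1} up to the leading constant; to reach exactly $\tfrac12$ one replaces the lower-tail small-ball step by a two-sided deviation inequality for the sub-exponential variables $\dotp{X_i}{u}^2$ and absorbs the cross term $\|u\|_\Sigma\|u\|_\lambda$ by AM--GM.

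For \eqref{eq:property3} with $v$ fixed, write $n^{-1/2}v^\top X u = \dotp{h}{u}$ with $h = n^{-1/2}X^\top v = (\|v\|_2/\sqrt n)\,\Sigma^{1/2}\tilde g$, where $\tilde g$ is $1$-sub-Gaussian. The calibration gives $\E\|h\|_{\lambda^*}\le\|v\|_2/10$, accounting for the term $\tfrac1{10}\|u\|_\lambda\|v\|_2$. For the fluctuation, $u\mapsto\dotp{h}{u}$ is sub-Gaussian with respect to the metric $(\|v\|_2/\sqrt n)\|\cdot\|_\Sigma$, so a Borell--TIS-type concentration shows that the supremum of $\dotp{h}{u}-\tfrac1{10}\|u\|_\lambda\|v\|_2$ over the $\Sigma$-unit ball exceeds its (nonpositive) mean by at most $C'\sqrt{\log(1/\delta)/n}\,\|v\|_2$; homogenizing over $\|u\|_\Sigma$ attaches this deviation to $\|u\|_\Sigma$ and gives \eqref{eq:property3}. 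Inequality \eqref{eq:property2} is the version uniform in $v$ as well: the complexity of the $v$-direction is controlled by its own dual sorted norm, which is exactly what produces the symmetric term $\tfrac1{10}\|v\|_\lambda\|u\|_\Sigma$, while the genuinely bilinear remainder is handled by a double peeling over the scales of $\|u\|_\Sigma$ and $\|v\|_2$ combined with chaining.

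I expect the main obstacle to be \eqref{eq:property2}: making the bound uniform in both $u$ and $v$ turns $v^\top X u$ into a second-order, chaos-like supremum, and the residual deviation must be charged to $\|u\|_\Sigma\|v\|_2$ rather than to the two sorted norms. This forces a careful separation of the ``bulk'' directions, absorbed by the sorted-$\ell_1$ penalties through the calibration, from the few large-deviation directions, absorbed by the $\Sigma$- and $\ell_2$-geometry. A secondary but pervasive difficulty is the geometric mismatch: the penalties live in the sorted-$\ell_1$ geometry of $u$ and $v$, whereas the randomness lives in the $\Sigma$-geometry of $\Sigma^{1/2}u$; preserving the explicit constants $\tfrac12,\tfrac14,\tfrac1{10}$ requires tracking this mismatch through the peeling and tuning the absolute constant $C$ in the definition of $\lambda_i$ accordingly.
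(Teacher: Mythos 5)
Your proposal is correct in its overall architecture and, for properties \eqref{eq:property2} and \eqref{eq:property3}, essentially coincides with the paper's argument: both rest on the calibration of $\lambda_i$ against the expected sorted maxima of a sub-Gaussian vector (the paper's Lemma \ref{lem:control_max}), which bounds the Gaussian mean width of the localized sets $\{\|u\|_\Sigma=1,\ \|u\|_\lambda\le\alpha\}$ by $\alpha/20$, followed by a deviation inequality for suprema of sub-Gaussian processes and a (double) peeling over $\alpha$ as in \cite{dalalyan2019outlier}. Two remarks on that part. First, what you call a ``Borell--TIS-type concentration'' is not available for general sub-Gaussian processes; the correct tool is the deviation form of the majorizing-measures/matrix deviation inequality (Theorem 4.1 of \cite{liaw2017simple}), which gives $\sup \le C(\omega(T)+h\cdot\mathrm{rad}(T))$ with probability $1-e^{-h^2}$ -- this is what the paper invokes. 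Second, the step you correctly flag as the main obstacle in \eqref{eq:property2} is resolved in the paper by an explicit increment bound for the bilinear process $Z_{v,b}=v^\top\tilde Xb$ on the product of unit spheres, namely $\E(Z_{v,b}-Z_{v',b'})^2\le\|v-v'\|_2^2+\|b-b'\|_2^2$, which decouples the chaos into the sum of two linear Gaussian processes and lets the same mean-width bound be applied to each factor separately; your outline does not supply this decoupling, and without it the ``double peeling combined with chaining'' does not yet close. (Also, the peeling is over the scales of $\|u\|_\lambda$ and $\|v\|_\lambda$, not of $\|u\|_\Sigma$ and $\|v\|_2$, which are normalized away.)

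For \eqref{eq:property1} you take a genuinely different route: Mendelson's small-ball method with Paley--Zygmund plus a multiplier term controlled by the dual sorted norm. This is viable under the stated assumptions (the fourth-moment and second-moment bounds follow from $1$-sub-Gaussianity and isotropy of the rows $Y_i$), and it is more robust in that it would survive much weaker tail assumptions on the design; its cost is that it produces an unspecified small constant in front of $\|u\|_\Sigma^2$ rather than the $\tfrac12$ appearing in \eqref{eq:property1}, which matters because the downstream proofs (Proposition \ref{prop:RE} and the main error bound) use these constants explicitly. The paper instead restricts to the cone $A=\{\|u\|_\Sigma^2\ge\|u\|_\lambda^2/2\}$ (off which \eqref{eq:property1} is vacuous), and applies the two-sided embedding of Corollary 1.5 in \cite{liaw2017simple} to $K\cap S^{p-1}$ with $K=\Sigma^{1/2}A$, obtaining $\|Xu\|_2/\sqrt n\ge\|u\|_\Sigma/\sqrt2$ directly once the mean width of $K\cap S^{p-1}$ is shown to be a small constant. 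Your own fallback (a two-sided deviation inequality for the sub-exponential variables $\dotp{X_i}{u}^2$, made uniform) is in effect this same route, so the gap is one of constants and bookkeeping rather than of substance.
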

We note that properties \eqref{eq:property1}, \eqref{eq:property2} and \eqref{eq:property3} are the only conditions we require from the design. Next, we explain the way we deal with heavy-tailed noise. Fix an integer $o'\leq n$: we can then treat the largest, in absolute value, $o'$ coordinates of the noise vector $\xi$ as ``outliers'' that we merge with the vector $\theta^\ast$, while the remaining coordinates of $\xi$ are sufficiently well-behaved and ``light-tailed.'' Therefore, we can replace $\xi_{(i)}$ by $\xi_{(i)} \mathbf{1}\{i \geq o' \}$ and $\sqrt{n}\theta_{(i)}$ by $\sqrt{n}\theta_{(i)} + \xi_{(i)} \mathbf{1}\{i \leq o' \}$, where $\xi_{(i)}$ denotes the i-th largest, in absolute value, element of the vector $\xi$. Note that this new noise vector is no longer centered, and that $o + o'$ becomes a new upper bound of the number of outliers.   
%; this approach is quite natural and has been previously used in \cite{prasad2019unified} in a different context. 
We then define the ``good'' event $\m E$ via
\begin{align*}
\mathcal{E} = \l\{  n/10 \leq \sum_{j\geq o'} |\xi|^2_{(j)} \leq 2 n  \text{ and }  \forall j\geq o', |\xi|_{(j)} \leq \sqrt{n} \mu_{j}/20 \r\},
\end{align*}
and show that $\mathcal{E}$ holds with high probability (see Lemmas \ref{lem:variance} and \ref{lem:quantile}). 
The following inequality is our main result which in turn implies the bounds of Theorems \ref{thm:sqrt lasso prob bound} and \ref{thm:sqrt lasso prob bound 2} under various assumptions on $\xi$.
%Finally, when event $\m E$ holds and the design matrix $X$ satisfies properties \eqref{eq:property1}, \eqref{eq:property2} and \eqref{eq:property3}, we prove that the following inequality is valid:  
%=========Thm: simultaneous bounds for beta and theta=========
\begin{theorem}
\label{thm: sqrt lasso bound}
Fix any $o' \geq o$. There exist absolute positive constants $c,C'$ with the following properties: assume that $\sum_{i=1}^{s}\lambda^2_i /\kappa(s) + \log(1/\delta)/n+ \sum_{i=1}^{o'}\mu^2_i \leq c$ and that event $\mathcal{E}$ occurs. If moreover properties \eqref{eq:property1}, \eqref{eq:property2} and \eqref{eq:property3} of the design matrix hold, the following bound is valid  whenever $\lambda_i \leq \mu_i$:
\[
\|\wh{\beta} - \beta^*\|^2_\Sigma \leq C'\sigma^2\l( \frac{\sum_{i=1}^{s}\lambda^2_i}{\kappa(s)} + \underset{j\geq o'}{\max}(\lambda^2_j/\mu^2_j) \l(\sum_{j=1}^{o'} \mu_j^2\r)^2  + \frac{\log(1/\delta)}{n} \r).
\]
\end{theorem}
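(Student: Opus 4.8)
The plan is to pit the optimality of $(\wh\beta,\wh\theta)$ against the three design facts \eqref{eq:property1}--\eqref{eq:property3} and the noise-control event $\mathcal E$, and to extract the rate from the quadratic structure hidden inside the square-root loss. I would begin with the noise-splitting reduction sketched above: zero out the $o'$ largest-in-absolute-value coordinates of $\xi$ to form $\tilde\xi$, and absorb them into a new outlier vector $\tilde\theta$ with $o'':=|\supp(\tilde\theta)|\le o+o'\le 2o'$ nonzero entries, so that $Y=X\beta^\ast+\sqrt n\,\tilde\theta+\sigma\tilde\xi$ still holds exactly. On $\mathcal E$ the surrogate noise is tame: $\|\tilde\xi\|_2^2\asymp n$ and $|\tilde\xi|_{(j)}^2\le n\mu_{(j)}^2$. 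All subsequent steps are run with $\tilde\theta$ in place of $\theta^\ast$, comparing the minimizer to $(\beta^\ast,\tilde\theta)$, and monotonicity of $\mu$ gives $\sum_{i\le o''}\mu_i^2\le 2\sum_{i\le o'}\mu_i^2$.

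Write $u=\wh\beta-\beta^\ast$, $w=\wh\theta-\tilde\theta$, $\hat r=\sigma\tilde\xi-(Xu+\sqrt n\,w)$, $q^\ast=\tfrac{\sigma}{\sqrt{2n}}\|\tilde\xi\|_2$, and let $P:=(\|\beta^\ast\|_\lambda-\|\wh\beta\|_\lambda)+(\|\tilde\theta\|_\mu-\|\wh\theta\|_\mu)$ be the penalty gain. The basic inequality $L(\wh\beta,\wh\theta)\le L(\beta^\ast,\tilde\theta)$ reads $\sqrt{Q(\wh\beta,\wh\theta)}-q^\ast\le P$. Convexity of $(\beta,\theta)\mapsto\sqrt{Q}$ bounds the left-hand side below by the linear term $-\tfrac1{\sqrt{2n}}\langle Xu+\sqrt n\,w,\tilde\xi\rangle/\|\tilde\xi\|_2$; I would control $\langle Xu,\tilde\xi\rangle$ through \eqref{eq:property3} applied with the fixed vector $v=\tilde\xi$ (legitimate since $\tilde\xi$ is independent of $X$), and $\langle w,\tilde\xi\rangle$ through the rearrangement inequality together with $|\tilde\xi|_{(j)}\le\sqrt n\,\mu_j$ on $\mathcal E$. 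Pairing these with the usual sorted-$\ell_1$ splitting of $P$ into a head part $\le\sqrt{\sum_{i\le s}\lambda_i^2}\,\|u\|_2+\sqrt{\sum_{i\le o''}\mu_i^2}\,\|w\|_2$ minus a tail part, and absorbing the small tail fractions, yields the cone membership $(u,w)\in\mathcal C(s,c_0,o',\delta)$; the restricted eigenvalue assumption then upgrades the head $\lambda$-term to $\sqrt{\sum_{i\le s}\lambda_i^2/\kappa(s)}\,\|u\|_\Sigma$.

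To extract the rate I would use the exact identity $Q(\wh\beta,\wh\theta)=q^{\ast2}-\tfrac{\sigma}{n}\langle\tilde\xi,Xu+\sqrt n\,w\rangle+\tfrac1{2n}\|Xu+\sqrt n\,w\|_2^2$. Since $Q(\wh\beta,\wh\theta)-q^{\ast2}=(\sqrt Q-q^\ast)(\sqrt Q+q^\ast)\le P(\sqrt Q+q^\ast)$, with $\sqrt Q+q^\ast\asymp\sigma$ on $\mathcal E$ (a preliminary bound shows the estimated scale satisfies $\hat q\asymp q^\ast\asymp\sigma$), the term $\tfrac1{2n}\|Xu+\sqrt n\,w\|_2^2$ is bounded above by linear-in-$(\|u\|_\Sigma,\|w\|_2)$ quantities times $\sigma$. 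Bounding it below by expanding the square and invoking \eqref{eq:property1} for $\|Xu\|_2^2/n$ and \eqref{eq:property2} for the cross term $\tfrac1{\sqrt n}w^\top Xu$ produces, on the cone, a positive-definite quadratic form in $(\|u\|_\Sigma,\|w\|_2)$. Combining the two bounds gives an inequality of the shape $\|u\|_\Sigma^2+c\|w\|_2^2\lesssim\sigma A\,(\|u\|_\Sigma+\|w\|_2)+\sigma^2 B$, which I solve for $\|u\|_\Sigma$. Here $A$ collects $\sqrt{\sum_{i\le s}\lambda_i^2/\kappa(s)}$, $\sqrt{\log(1/\delta)/n}$, and the outlier contribution $\max_{j\ge o'}(\lambda_j/\mu_j)\sum_{i\le o'}\mu_i^2$ --- the last appearing because the factor $\|w\|_\lambda$ in \eqref{eq:property2} is re-expressed via $\|w\|_\lambda\le\max_{j\ge o'}(\lambda_j/\mu_j)\|w\|_\mu\le\max_{j\ge o'}(\lambda_j/\mu_j)\sqrt{\sum_{i\le o''}\mu_i^2}\,\|w\|_2$ using $\lambda_i\le\mu_i$. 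Squaring $A$ reproduces the three terms of the stated bound.

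The main obstacle is the coupling of the regression error $u$ with the outlier error $w$ under the square-root normalization, which forces every stochastic quantity to be measured against the random residual $\tilde\xi$ and its norm. Decoupling the cross term $w^\top Xu$ via \eqref{eq:property2} without losing the optimal dependence on $o$, while simultaneously keeping $\hat q\asymp\sigma$ so that the $\sigma$-free calibration survives, is the delicate point; the smallness hypothesis $\sum_{i\le s}\lambda_i^2/\kappa(s)+\log(1/\delta)/n+\sum_{i\le o'}\mu_i^2\le 1/C'$ is precisely what keeps the quadratic form positive and prevents the noise-level estimate from degenerating.
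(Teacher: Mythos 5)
There is a genuine gap, and it sits exactly at the point where the claimed rate is supposed to appear. Your single-pass quadratic inequality $\|u\|_\Sigma^2+c\|w\|_2^2\lesssim\sigma A(\|u\|_\Sigma+\|w\|_2)+\sigma^2B$ cannot produce the term $\max_{j\geq o'}(\lambda_j^2/\mu_j^2)\bigl(\sum_{j\leq o'}\mu_j^2\bigr)^2$: the coefficient of $\|w\|_2$ on the right-hand side is forced to contain $\sigma\sqrt{\sum_{i\leq o'}\mu_i^2}$, because that is what the penalty gain $\|\tilde\theta\|_\mu-\|\wh\theta\|_\mu\leq 2\sqrt{\sum_{i\leq o''}\mu_i^2}\,\|w\|_2-\|w\|_\mu$ (times $\wh Q^{1/2}\asymp\sigma$) contributes, independently of how you treat the cross term from \eqref{eq:property2}. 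Completing the square against $c\|w\|_2^2$ then necessarily deposits $\sigma^2\sum_{i\leq o'}\mu_i^2$ into the bound for $\|u\|_\Sigma^2$. That is the \emph{crude} rate (it is what the paper's display \eqref{equation:thm1 middle result 1} gives), and it is strictly weaker than the theorem's claim by roughly a factor $(n/o')/\log(n/o')$. In addition, the two norm inequalities you invoke to smuggle the sharp factor into $A$ are false for a general $w\in\mb R^n$: $\|w\|_\lambda\leq\max_{j\geq o'}(\lambda_j/\mu_j)\|w\|_\mu$ fails because the maximum must run over all indices including $j<o'$ (and the ratio $\lambda_j/\mu_j$ is not monotone in general), and $\|w\|_\mu\leq\sqrt{\sum_{i\leq o''}\mu_i^2}\,\|w\|_2$ is only a Cauchy--Schwarz bound when $w$ has at most $o''$ nonzero entries, which $\Delta^\theta=\wh\theta-\tilde\theta$ does not.

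The missing idea is that the proof is two-stage. Stage one is exactly what you describe (basic inequality, cone membership, Proposition \ref{prop:RE}) and yields the crude bounds $\|\Delta^\beta\|_\Sigma^2+\|\Delta^\theta\|_2^2\lesssim\sigma^2\bigl(\sum_{i\leq s}\lambda_i^2/\kappa(s)+\log(1/\delta)/n+\sum_{i\leq o'}\mu_i^2\bigr)$ together with the same bound for $\|\Delta^\beta\|_\lambda+\|\Delta^\theta\|_\mu$. Stage two treats $\theta$ as a nuisance parameter: since $\wh\beta$ minimizes $\beta\mapsto Q(\beta,\wh\theta)^{1/2}+\|\beta\|_\lambda$ with $\wh\theta$ frozen, the first-order condition for $\beta$ alone gives an identity in which only $\|X^{(n)}\Delta^\beta\|_2^2$ appears on the left and $\Delta^\theta$ enters solely through the cross term $\langle X^{(n)}\Delta^\beta,\Delta^\theta\rangle$, bounded via \eqref{eq:property2} by $\|\Delta^\theta\|_\lambda\|\Delta^\beta\|_\Sigma/10+\cdots$. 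One then splits $\|\Delta^\theta\|_\lambda$ into head ($j\leq o'$, handled by Cauchy--Schwarz and the stage-one bound on $\|\Delta^\theta\|_2$) and tail ($j\geq o'$, where $\lambda_j\leq\max_{j\geq o'}(\lambda_j/\mu_j)\,\mu_j$ legitimately applies and the stage-one bound $\|\Delta^\theta\|_\mu\lesssim\sigma\sum_{i\leq o'}\mu_i^2$ is plugged in); this is the only place the factor $\max_{j\geq o'}(\lambda_j^2/\mu_j^2)\bigl(\sum_{j\leq o'}\mu_j^2\bigr)^2$ is created. Without this bootstrap you cannot decouple $\|\Delta^\beta\|_\Sigma$ from $\|\Delta^\theta\|_2$, and the stated theorem does not follow.
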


%###########################################
\section{Discussion and comparison to existing results.}
\label{section:discussion}
%###########################################

Below, we give a brief overview of existing literature and results that are most closely related to the problem considered in this work. For an extended overview of the classical and modern approaches to robust regression, we refer the reader to the excellent discussions in \cite[section 2]{she2011outlier} and \cite[section 4]{dalalyan2019outlier}. 

The idea of taking advantage of sparsity of the sequence of outliers and applying Lasso or Dantzig selector-type algorithms has been previously suggested in \cite{candes2008highly,laska2009exact,she2011outlier,nguyen2012robust,donoho2016high}, among other works. 
In particular, in \cite{donoho2016high} authors note that the solution $\widetilde\beta$ of the convex problem
\[
\l(\widetilde\beta,\widetilde\theta\r) = \argmin_{\beta\in \mb R^p,\theta\in \mb R^n} \l[\frac{1}{2n}\sum_{j=1}^n (y_j - X_j^T \beta - \theta)^2 + \lambda_1 \|\beta\|_1+ \lambda_2\|\theta\|_1\r]
\]
can be equivalently written, after carrying out minimization over $\theta$ explicitly, as 
\[
\widetilde \beta =  \argmin_{\beta\in \mb R^p} \l[ \lambda_2^2 \sum_{j=1}^n H\l( \frac{y_j - X_j^T \beta}{\lambda_2\sqrt{n}}\r) + \lambda_1 \|\beta\|_1 \r],
\] 
where 
\[
H(x)=\begin{cases}
x^2/2, & |x|\leq 1,
\\
|x|-1/2, & |x|>1
\end{cases}
\]
is the Huber's loss function; similar connection has been used in several earlier works, including \cite{sardy2001robust,gannaz2007robust}. More recently, in \cite{dalalyan2019outlier,thompson2020outlier} authors improved the  bounds proven in \cite{nguyen2012robust} and showed that for the Gaussian design and Gaussian additive noise, 
$\|\widetilde\beta - \beta^\ast\|^2_\Sigma = O_P\l( \frac{s\log(p)}{n} + \l(\frac{o\log(n)}{n}\r)^2\r)$ which is nearly minimax optimal in the ratio $\frac{o}{n}$ (note that the additional $\log(n)$ factor makes the bound suboptimal \cite{chen2016general}). At the same time, estimators that achieve minimax optimality, such as the methods based on regression depth \cite{gao2020robust}, are not computationally feasible, 
Our work has been partially motivated by the question raised by the authors of \cite{dalalyan2019outlier}, namely, whether the penalized ERM-type methods can also handle the case of heavy-tailed additive noise variables $\{\xi_i\}_{i=1}^n$ and yield optimal or near-optimal rates. Results of the present paper give a generally affirmative answer and make an extra step by proving that it is possible to be computationally efficient and minimax optimal with respect to the sparsity level and contamination level, while being completely adaptive and achieve strong concentration of the resulting estimators simultaneously. Related results in the literature, such as the work \cite{finocchio2021robust}, establish strong theoretical guarantees for the estimators that are not efficiently computable. 

Very recently, a model with adversarially contaminated design and response was considered in \cite{sasai2022outlier}, however, the resulting bounds are only valid for very sparse signals such that $s\lesssim \sqrt{n}$. A similar setup was also considered in \cite{diakonikolas2019efficient} and \cite{pensia2020robust} without the sparsity assumptions. For example, in \cite{pensia2020robust} the authors used a black-box ``filtering'' algorithm to eliminate outliers from the design matrix provided that the covariance matrix $\Sigma$ is known. Our goal was to show that similar results hold for a simple procedure and without additional knowledge about the parameters of the problem. Finally, let us remark that in the low-dimensional case $p<n$, there exist estimators capable of approximating $\beta^\ast$ regardless of the number of outliers $o$ as long the following conditions hold: (i) $o<cn$, (ii) the contamination is oblivious and (iii) the design matrix $X$ is sufficiently nice (e.g., has normally distributed rows); this fact was proven in \cite{bhatia2017consistent}.

\section*{Acknowledgements.}
Stanislav Minsker and Lang Wang acknowledge support by the National Science Foundation grants CIF-1908905 and DMS CAREER-2045068. The work of Mohamed Ndaoud was supported by a Chair of
Excellence in Data Science granted by the CY Initiative.

\bibliographystyle{plain}
%\bibliography{references}

\appendix
%=======================================
%			Appendix to Chapter 2
%=======================================

%\end{proof}

%=======================================
% Section: Proofs for Section 3.3
%=======================================
%####################
\section{Technical results.}
\label{app:lemma}
%####################

Recall that $\E(|\xi_i|^\tau) \leq a^\tau$ for some $\tau \geq 2$ and $a\geq 1$, implying that $\Prob(|\xi| \geq t) \leq (a/t)^\tau \text{ for all } t$. Without loss of generality, we will assume that $a=1$, otherwise we can simply replace $\xi_i$ by $\xi_i/a$ and $\sigma$ by $\sigma\cdot a$. 
%\simo{This expectation is not necessarily finite. I suggest we modify the definition of the class of  distributions and replace the tail decay by $\E(|\xi_i|^\tau) < \infty $. The tail decay would follow using Markov's inequality.}

\begin{lemma}
\label{lem:quantile}
For any $1\leq i\leq n$, set $\mu_i = \frac{C}{\sqrt{n}} \l(\frac{n}{i}\r)^{1/\tau}$ for $\tau \geq 2$ and $C\geq 80$.  Then for any $k \geq 1$, the following inequality holds:
\[
\Prob\l(\underset{i \geq k}{\max}\frac{|\xi|_{(i)}}{\sqrt{n}\mu_i}  \geq 1/20 \r) \leq 2e^{-k}.
\]
Moreover, for all $i$ such that $\log(n) \leq i\leq n$, 
\[
\E\l(\frac{|\xi|_{(i)}}{\sqrt{n}\mu_i}\r) \leq 1. 
\]
The result holds for sub-Gaussian noise as well with the choice $\mu_i = \lambda_i$.
\end{lemma}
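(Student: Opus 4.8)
The plan is to convert the statement about order statistics into one about counting exceedances, where binomial tail bounds apply directly. The elementary identity driving everything is that for a fixed threshold $t$ the event $\{|\xi|_{(i)}\geq t\}$ coincides with $\{N(t)\geq i\}$, where $N(t):=\#\{j\leq n:\ |\xi_j|\geq t\}$. Taking $t=t_i:=\sqrt n\,\mu_i/20=\tfrac{C}{20}(n/i)^{1/\tau}$, the event $\{|\xi|_{(i)}/(\sqrt n\,\mu_i)\geq 1/20\}$ becomes $\{N(t_i)\geq i\}$. First I would estimate the single-exceedance probability $p_i:=\Prob(|\xi_1|\geq t_i)$. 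Since $\E(|\xi_1|^\tau)\leq 1$, Markov's inequality gives $p_i\leq t_i^{-\tau}=(20/C)^\tau\,(i/n)=:q\,(i/n)$, where $q=(20/C)^\tau$ can be made as small as we like by enlarging $C$. Hence $N(t_i)\sim\mathrm{Bin}(n,p_i)$ has mean at most $q\,i$.

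Next I would apply a binomial upper-tail estimate, $\Prob(\mathrm{Bin}(n,p)\geq k)\leq\binom{n}{k}p^k\leq(enp/k)^k$, with $k=i$ and $np\leq q\,i$, obtaining $\Prob(N(t_i)\geq i)\leq(eq)^i$. A union bound over $i\geq o$ controls the maximum, $\Prob\l(\max_{i\geq o}\tfrac{|\xi|_{(i)}}{\sqrt n\,\mu_i}\geq 1/20\r)\leq\sum_{i\geq o}(eq)^i\leq(eq)^o/(1-eq)$, which is geometric in $o$ once $eq<1$. For the sub-Gaussian case one repeats this with $\mu_i=\lambda_i$ and the Gaussian tail $\Prob(|\xi_1|\geq t_i)\leq 2e^{-t_i^2/2}$; with $t_i\asymp\sqrt{\log(en/i)}$ this yields $p_i\leq(i/n)^K$ for an exponent $K\asymp C^2$ that grows with $C$, so $enp_i/i\lesssim(i/n)^{K-1}$ and the per-index bound becomes super-geometric. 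Since the resulting terms $\l((i/n)^{K-1}\r)^i$ are maximized at $i=o$, summation gives $e^{-c\,o\log(n/o)}$, exactly the claimed form; the extra logarithmic factor is precisely the footprint of the lighter tail, whereas in the polynomial regime the calibration of $\mu_i$ at the $\tau$-th moment level keeps the decay at the $e^{-c\,o}$ scale.

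For the moment bound I would integrate the tail. Writing $\E|\xi|_{(i)}=\int_0^\infty\Prob(N(t)\geq i)\,dt$ and splitting at $T:=(C_0 n/i)^{1/\tau}$, I bound the integrand by $1$ on $[0,T]$ and by $(en/i)^i\,t^{-\tau i}$ on $[T,\infty)$ using $\Prob(|\xi_1|\geq t)\leq t^{-\tau}$. The tail integral equals $\tfrac{(e/C_0)^i}{\tau i-1}\,T$, which is dominated by $T$ once $C_0\geq e$ and $\tau i\geq 2$, so $\E|\xi|_{(i)}\lesssim T\asymp(n/i)^{1/\tau}$. Dividing by $\sqrt n\,\mu_i=C(n/i)^{1/\tau}$ produces a ratio of order $C_0^{1/\tau}/C$, which is below $1/10$ for $C$ large; the sub-Gaussian computation is identical with $T\asymp\sqrt{\log(en/i)}$.

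The main obstacle is the sharp quantitative form of the tail rather than its existence. The per-index binomial estimate is essentially tight, being matched by a near-Pareto law that saturates Markov's inequality at $t_i$, so in the heavy-tailed case the expected number of exceedances is genuinely proportional to $i$ and drives only geometric decay; the additional $\log(n/o)$ gain materializes only when the tail is light enough to make $np_i\ll i$ by a factor polynomial in $i/n$. The delicate bookkeeping is therefore (i) choosing $C$ (equivalently $q$, or the exponent $K$) large enough that the series converges, (ii) verifying that the dominant contribution in the union bound sits at the left endpoint $i=o$ and that the monotonicity of $N(t_i)$ in $i$ does not inflate the sum beyond its leading term, and (iii) absorbing the polynomial number of indices into the exponential using $o\log(n/o)\gtrsim\log n$.
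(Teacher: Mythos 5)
Your argument follows essentially the same route as the paper's: identify the event $\{|\xi|_{(i)}\geq t\}$ with the event that at least $i$ coordinates exceed $t$, bound the latter by $\binom{n}{i}\,\Prob(|\xi_1|\geq t)^{i}$, control the single-coordinate probability by Markov's inequality applied to the $\tau$-th moment, and take a union bound over $i\geq o$. Your treatment of the expectation by direct tail integration is actually cleaner than the paper's Cauchy--Schwarz step, since it works for every $1\leq i\leq n$ without invoking a deviation bound that is only established for $i\geq o$.

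The one substantive point concerns the exponent in the first claim. Your per-index bound $(enp_i/i)^i\leq (e(20/C)^\tau)^i$ sums to $e^{-c\,o}$, and you correctly observe that no $\log(n/o)$ gain is available for polynomial tails; so your proof delivers a weaker rate than the one stated in the lemma when $\tau<\infty$. Be aware, however, that the stated rate is not attainable there: a distribution saturating the $\tau$-th moment constraint at the level $t=\sqrt{n}\mu_o/20$ (say, $|\xi_1|=M$ with probability $M^{-\tau}$ for $M=(C/20)(n/o)^{1/\tau}$, suitably adjusted to have mean zero and unit variance) makes the exceedance count binomial with mean of order $o$, whence $\Prob(|\xi|_{(o)}\geq \sqrt{n}\mu_o/20)\gtrsim e^{-C_\tau o}$ with no extra logarithm. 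The paper's own display has the same feature: the product $e^{i\log(n/i)}\cdot\l((20/C)^\tau i/n\r)^{i}$ equals $(e(20/C)^\tau)^i=e^{-ci}$, because the $\log(n/i)$ coming from the binomial coefficient is exactly cancelled by the $(i/n)^i$ coming from Markov, so the asserted intermediate bound $e^{-ci\log(n/i)}$ does not follow. The discrepancy is harmless for the rest of the paper: Lemma \ref{lem:variance} and the probability statements of the main theorems only require the good event to hold with probability $1-e^{-co'}$, for which the geometric rate suffices, and in the sub-Gaussian case your argument does deliver the full $e^{-c\,o\log(n/o)}$.
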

\begin{proof}
For any fixed $i\geq k$,
\[
\Prob\l(\frac{|\xi|_{(i)}}{\sqrt{n}\mu_i}  \geq 1/20 \r) = \Prob\l(\exists |I| = i, \forall j \in I \quad   \frac{|\xi_j|}{\sqrt{n}\mu_i}  \geq 1/20 \r).
\]
Therefore, applying the inequality ${n \choose i} \leq e^{i\log(en/i)}$ and the assumption $C\geq 80$, we deduce that 
\ben{
\label{eq:prob_exp}
\Prob\l(\frac{|\xi|_{(i)}}{\sqrt{n}\mu_i}  \geq 1/20 \r) \leq  e^{i\log(en/i)}\Prob\l(  \frac{|\xi_1|}{\sqrt{n}\mu_i}  \geq 1/20 \r)^{i}\leq e^{-i}.
}
We conclude using the union bound over $i\geq k$ and the fact that $\sum_{i=k}^n e^{-i} \leq 2e^{-k}$. 

\noindent To get the result in expectation, let us denote $ \gamma := \frac{|\xi|_{(i)}}{\sqrt{n}\mu_i} $. Observe that
\[ \E(\gamma) \leq \E(\gamma \mathbf{1}\{\gamma \leq 1/20\}) + \E(\gamma \mathbf{1}\{\gamma \geq 1/20\}). \]
Using Cauchy-Schwarz inequality, we get that
\[
\E\l(\frac{|\xi|_{(i)}}{\sqrt{n}\mu_i}\r) \leq 1/20 + \sqrt{\E\l(\frac{|\xi|^2_{(i)}}{n\mu^2_i}\r)\Prob\l(\frac{|\xi|_{(i)}}{\sqrt{n}\mu_i}\geq 1/20\r)}. 
\]
Since $n\mu^2_i \geq 8$ and $ \E(|\xi|^2_{(i)}) \leq  \E\l(\sum_{i=1}^{n}\xi^2_{i} \r) \leq n  $ we conclude using \eqref{eq:prob_exp} that
\[
\E\l(\frac{|\xi|_{(i)}}{\sqrt{n}\mu_i}\r) \leq 1/20 + \frac{\sqrt{n}}{2}e^{- i /2 } \leq 1,
\]
as long as $i \geq \log(n)$.
\end{proof}

\begin{lemma}
\label{lem:variance}
Assume that $\E(\xi^2_i \mathbf{1}\{|\xi_i|\leq 1/2\} ) \geq 1/4 $ and that $o \leq n/1000$. Then
\[
\Prob\l( n/10 \leq \sum_{i=o}^{n} |\xi|^2_{(i)}  \leq 2n \r) \geq 1 - 3e^{-co},
\]
for an absolute constant $c>0$.
\end{lemma}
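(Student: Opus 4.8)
The plan is to handle the two inequalities separately: the lower bound is elementary, while the upper bound is the genuine difficulty because of the heavy tails and requires a truncation argument.

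\textbf{Lower bound.} Here I would discard the heavy part of the noise entirely and work only with its bounded contribution. Set $Z = \sum_{j=1}^n \xi_j^2 \mathbf 1\{|\xi_j| \le 1/2\}$, a sum of i.i.d.\ random variables taking values in $[0,1/4]$ with mean at least $1/4$ by the small-ball assumption $\E(\xi_j^2\mathbf 1\{|\xi_j|\le 1/2\})\ge 1/4$. Hoeffding's inequality then gives $Z \ge n/8$ with probability at least $1 - e^{-cn} \ge 1 - e^{-co}$. The key observation is that passing from $Z$ to $\sum_{i=o}^n |\xi|^2_{(i)}$ only removes the $o-1$ largest coordinates, hence at most $o-1$ of the ``small'' coordinates (those with $|\xi_j|\le 1/2$), each contributing at most $1/4$. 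Thus $\sum_{i=o}^n |\xi|^2_{(i)} \ge Z - (o-1)/4 \ge n/8 - o/4 \ge n/100$ once $o \le n/C'$ with $C'$ a large enough absolute constant.

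\textbf{Upper bound.} The difficulty is that the crude bound $\sum_{i=o}^n|\xi|^2_{(i)} \le \sum_j \xi_j^2$ is useless: for $\tau<4$ the summand $\xi_j^2$ has infinite variance and the full sum does not concentrate. The remedy is to observe that removing the top $o-1$ order statistics is essentially the same as truncating at the level of the $o$-th order statistic. Set $u_0 \asymp (n/o)^{1/\tau}$ (concretely $u_0 = \sqrt n\,\mu_o/20$). By Lemma \ref{lem:quantile} applied at index $o$ (or directly by a Chernoff bound on the binomial $\#\{j:|\xi_j|>u_0\}$, whose mean is at most $o/2$ for a suitable choice of the constant), the event $\{|\xi|_{(o)} \le u_0\}$ holds with probability at least $1 - e^{-co}$. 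On this event every coordinate retained in the sum satisfies $|\xi_j| \le u_0$, so
\[
\sum_{i=o}^n |\xi|^2_{(i)} \le \sum_{j=1}^n \xi_j^2\, \mathbf 1\{|\xi_j| \le u_0\} \le \sum_{j=1}^n \min(\xi_j^2, u_0^2).
\]
The last sum is now a sum of i.i.d.\ variables bounded by $u_0^2$, with mean at most $n$ and per-term variance at most $u_0^2$, so Bernstein's inequality yields $\sum_j \min(\xi_j^2, u_0^2) \le 3n/2$ outside an event of probability $\exp(-c\,n/u_0^2)$.

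\textbf{Matching the exponent.} The one computation I would check carefully is that the Bernstein exponent is of the right order: since $u_0^2 \asymp (n/o)^{2/\tau}$, we get $n/u_0^2 \asymp n^{1-2/\tau} o^{2/\tau} \ge o$, the last inequality being exactly $(n/o)^{1-2/\tau} \ge 1$, valid because $\tau \ge 2$ and $o \le n$. Intersecting the two events gives $\sum_{i=o}^n |\xi|^2_{(i)} \le 3n/2 \le 2n$ with probability at least $1 - e^{-c'o}$, and combining with the lower bound completes the proof. I expect the truncation-level bookkeeping to be the only delicate point: one must pick $u_0$ so that simultaneously $|\xi|_{(o)} \le u_0$ with probability $1-e^{-co}$ \emph{and} the Bernstein exponent $n/u_0^2$ dominates $o$; everything else is routine.
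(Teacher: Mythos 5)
Your proof is correct, and it diverges from the paper's in an instructive way on both halves. For the upper bound your route is essentially the paper's: truncate at the level of the $o$-th order statistic (the paper uses $R=C(n/o)^{1/2}$, which dominates your $u_0\asymp(n/o)^{1/\tau}$ for all $\tau\ge 2$, so both give exponent at least $c\,o$), then apply a concentration inequality to the truncated sum. One point in your favor: you correctly identify that the exponent $n/u_0^2$ comes from a \emph{variance-based} (Bernstein-type) bound, whereas the paper invokes ``Hoeffding's inequality'' for variables bounded by $R^2$, which by itself would only give exponent $n/R^4\asymp o^2/n$ — the paper's stated exponent $n/R^2$ implicitly requires the second-moment bound $\E\xi_i^2\le 1$ exactly as you use it. For the lower bound your argument is genuinely different and simpler: the paper bounds $\Prob(\min_{|I|=n-o}\sum_{i\in I}\xi_i^2\le n/100)$ by a union bound over all $\binom{n}{o}$ subsets, paying a factor $e^{o\log(n/o)}$ that must then be absorbed by a Hoeffding tail $e^{-cn}$, which is where the hypothesis $o\le n/C'$ enters; you instead compare directly to $Z=\sum_j\xi_j^2\mathbf 1\{|\xi_j|\le 1/2\}$ and observe that deleting the top $o-1$ order statistics costs at most $(o-1)/4$ deterministically. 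This avoids the combinatorial union bound entirely, uses $o\le n/C'$ only in the trivial final arithmetic $n/8-o/4\ge n/100$, and yields the same (in fact cleaner) probability $1-e^{-cn}$. Both arguments are valid; yours is the more economical one for the lower tail.
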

\begin{proof}
For the upper bound, we only need to control the random variables bounded by $C\sqrt{n/o}$, in view of Lemma \ref{lem:quantile} applied with $k=o$. Set
\[
R = C\left( \frac{n}{o }\right)^{1/2}. 
\]
Observe that, as long as $|\xi|_{(o)} \leq R $, we have
\[
\sum_{i=o}^{n} |\xi|^2_{(i)} \leq \sum_{i=1}^{n} \xi^2_{i}\mathbf{1}\{|\xi_i| \leq R\}.
\]
Since $\xi^2_{i}\mathbf{1}\{|\xi_i| \leq R\} \leq R^2$ and $\E(\xi^2_{i}\mathbf{1}\{|\xi_i| \leq R\}) \leq 1$, Hoeffding's inequality yields that 
\[
\Prob\l( \l\{ \sum_{i=o}^{n} |\xi|^2_{(i)}   \geq 2 n \r\} \cap \l\{ |\xi|_{(o)} \leq R   \r\} \r) \leq \exp(-  n/R^2) .
\]
Noticing that $n/R^2 = o/C^2$, the upper bound follows for $c = 1/C^2$ from the inequality \eqref{eq:prob_exp}, since
\[
\Prob\l( \sum_{i=o}^{n} |\xi|^2_{(i)}   \geq 2 n  \r) \leq \Prob\l( \l\{ \sum_{i=o}^{n} |\xi|^2_{(i)}   \geq 2 n \r\} \cap \l\{ |\xi|_{(o)} \leq R   \r\} \r) + \Prob\l(  |\xi|_{(o)} \geq R  \r) \leq 2\exp(-co) .
\]
For the lower bound, observe that 
\[
\Prob\l( n/10 \geq \sum_{i=o}^{n} |\xi|^2_{(i)}   \r) = \Prob\l( n/10 \geq \underset{|I| = n-o+1}{\min}\sum_{i \in I} |\xi_i|^2  \r) \leq e^{o\log(\frac{en}{o-1})} \Prob\l( n/10 \geq \sum_{i=1}^{n-o+1} |\xi_i|^2  \r),
\]
where we use the union bound together with the relations ${n \choose n-o+1}  = {n \choose o-1} \leq e^{o\log(\frac{en}{o-1})}$.  
Since $\E(|\xi_i|^2 \mathbf{1}\{|\xi_i| \leq 1/2 \}) \geq 1/4$,
\mln{
\Prob\l( n/10 \geq \sum_{i=o}^{n} |\xi|^2_{(i)}   \r)  \\
\label{eq:hoeffding1}
\leq e^{o\log(en/(o-1))} \Prob\l( - n/10 \geq \sum_{i=1}^{n-o+1} |\xi_i|^2 \mathbf{1}\{|\xi_i| \leq 1/2 \} 
 - \E(|\xi_i|^2 \mathbf{1}\{|\xi_i| \leq 1/2 \})  \r).
}
We conclude, using Hoeffding's inequality to estimate the probability in \eqref{eq:hoeffding1}, that
\[
\Prob\l( n/10 \geq \sum_{i=o}^{n} |\xi|^2_{(i)}   \r)  \leq e^{o\log(en/(o-1)) - n/100} \leq \exp(-cn),
\]
for $c$ small enough. 
\end{proof}

\begin{lemma}
\label{lem:control_max}
Assume that $\xi$ is a centered Gaussian vector such that $\E(\xi_i^2)\leq 1$ for all $1\leq i\leq n$. Then
\[
\E\l( \underset{i=1,\dots,n}{\max} \frac{|\xi|_{(i)}}{\sqrt{\log(en/i)}}\r) \leq 20.
\]
\end{lemma}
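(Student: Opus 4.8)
The plan is to control the random variable $Z:=\max_{1\le i\le n}|\xi|_{(i)}/\sqrt{\log(en/i)}$ through its tail, using the layer-cake identity $\E Z=\int_0^\infty \Prob(Z\ge t)\,dt$ and splitting the integral at a well-chosen threshold $t_0$: the contribution below $t_0$ is bounded crudely by $t_0$, while above $t_0$ the tail decays like a Gaussian and integrates to an absolute constant. First I would reduce the event $\{Z\ge t\}$ to a union over the order statistics, writing $\Prob(Z\ge t)\le\sum_{i=1}^n\Prob\!\big(|\xi|_{(i)}\ge t\sqrt{\log(en/i)}\big)$, so that the whole problem reduces to a sharp tail bound for a single order statistic.

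The key step is the standard combinatorial estimate for order statistics of independent Gaussian coordinates. Since $\{|\xi|_{(i)}\ge u\}$ forces at least $i$ of the coordinates to exceed $u$ in absolute value, a union bound over the $\binom{n}{i}$ subsets of size $i$, combined with the Gaussian tail bound $\Prob(|\xi_j|\ge u)\le 2e^{-u^2/2}$ (valid since $\E\xi_j^2\le 1$), gives
\[
\Prob\!\big(|\xi|_{(i)}\ge u\big)\le \binom{n}{i}\big(2e^{-u^2/2}\big)^{i}\le \Big(\tfrac{2en}{i}\Big)^{i}e^{-iu^2/2}.
\]
Substituting $u=t\sqrt{\log(en/i)}$ is what makes everything collapse: it turns $e^{-iu^2/2}$ into $(en/i)^{-it^2/2}$, so that, writing $a=en/i\ge e$, the $i$-th summand becomes $\big(2\,a^{1-t^2/2}\big)^{i}\le\big(2e^{-(t^2/2-1)}\big)^{i}$ for every $t\ge\sqrt2$, uniformly in $i$. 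The point of the substitution is that the $i$-dependence now enters only through a common geometric ratio.

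Finally I would fix $t_0=\sqrt{2+2\log 4}$, so that $2e^{-(t^2/2-1)}\le\tfrac12$ for all $t\ge t_0$ (with equality at $t_0$). Summing the geometric series in $i$ then yields $\Prob(Z\ge t)\le 4e^{-(t^2/2-1)}=4e\,e^{-t^2/2}$ for $t\ge t_0$, whence
\[
\E Z\le t_0+\int_{t_0}^{\infty}4e\,e^{-t^2/2}\,dt\le t_0+4e\sqrt{\pi/2},
\]
which is numerically below $16<20$. I do not expect a genuine obstacle here, as the argument is essentially bookkeeping for the order statistics of a Gaussian sample; the two points that require care are that the combinatorial tail bound uses independence of the coordinates (which holds under the standing i.i.d. assumption on the noise, recalling $en/i\ge e$ so that $\sqrt{\log(en/i)}\ge1$), and that $t_0$ must be taken large enough to force the geometric ratio below $1$ uniformly in $i$, guaranteeing that the surviving tail decays fast enough to integrate to an absolute constant.
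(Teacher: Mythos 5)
Your argument is internally consistent and the numerics check out, but it proves a weaker statement than the one the lemma asserts, and the difference matters for how the lemma is used in the paper. The hypothesis is only that $\xi$ is a centered Gaussian vector with $\E(\xi_i^2)\leq 1$ — the coordinates are \emph{not} assumed independent. Indeed, in the proof of Theorem \ref{thm:sub-gaussian} the lemma is applied (via the bound \eqref{eq:control_max} on the Gaussian mean width) to a Gaussian vector with covariance $\Sigma$, whose coordinates are correlated; the ``standing i.i.d.\ assumption on the noise'' that you invoke concerns the regression noise, which is a different object. Your key step, the combinatorial bound
\[
\Prob\l(|\xi|_{(i)}\geq u\r)\;\leq\;\binom{n}{i}\,\Prob\l(|\xi_1|\geq u\r)^{i},
\]
uses the factorization $\Prob(\forall j\in I:\ |\xi_j|\geq u)=\prod_{j\in I}\Prob(|\xi_j|\geq u)$, which fails without independence (e.g.\ if all coordinates are perfectly correlated, the left side is of order $e^{-u^2/2}$ for every $i$, not $e^{-iu^2/2}$). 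So as written the proof does not establish the lemma in the generality in which it is needed.

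The paper's proof is designed precisely to avoid this. Letting $\hat i$ be the maximizing index, it uses the \emph{deterministic} inequality $\hat i\,\exp(\xi^2_{(\hat i)}/4)\leq\sum_{j=1}^n\exp(\xi_j^2/4)$ together with $\hat i\,(en/\hat i)^{t^2}\geq n e^{t^2}$ for $t\geq 1$, and then applies Markov's inequality to $\sum_j\exp(\xi_j^2/4)$; this requires only the marginal bound $\E\exp(\xi_j^2/4)\leq 5$ for each coordinate separately and no joint structure at all. If you want to salvage your route, you would need to replace the product bound by an argument valid for dependent coordinates — for instance, bounding $\Prob(|\xi|_{(i)}\geq u)\leq \frac{1}{i}\sum_j\Prob(|\xi_j|\geq u)\leq \frac{n}{i}\cdot 2e^{-u^2/2}$ via Markov applied to the count $\#\{j:|\xi_j|\geq u\}$, which after the substitution $u=t\sqrt{\log(en/i)}$ gives a summand of order $(en/i)^{1-t^2/2}$; this loses the exponent $i$ but the series over $i$ still converges for $t$ bounded away from $\sqrt{2}$, and one recovers a bound of the same flavor as the paper's.
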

\begin{proof}
Set $\lambda_i^2 = 4\log(en/i)$. Let $\hat{i}$ be an index such that
$ \underset{i=1,\dots,n}{\max} \frac{|\xi|_{(i)}}{\lambda_i} = \frac{|\xi|_{(\hat i)}}{\lambda_{\hat i}}$. Then
\begin{align*}
    \E\l( \underset{i=1,\dots,n}{\max} \frac{|\xi|_{(i)}}{\lambda_i}\r)  &\leq 1 + \E\l( \frac{|\xi|_{(\hat i)}}{\lambda_{\hat i}} \mathbf{1}\l( \frac{|\xi|_{(\hat i)}}{\lambda_{\hat i}} \geq 1\r)\r)\\
    & \leq 1 + \int_1^{\infty} \Prob\l( |\xi|_{(\hat i)} \geq t \lambda_{\hat i} \r) \mathrm{d}t\\
    & \leq 1 + \int_1^{\infty} \Prob\l( \hat i  \exp\l( \xi^2_{(\hat i)}/4 \r) \geq \hat i \exp\l(t^2 \lambda^2_{\hat i}/4\r) \r) \mathrm{d}t.
\end{align*}
On the one hand, we have that 
\[
 \hat i  \exp\l( \xi^2_{(\hat i)}/4 \r) \leq \sum_{j=1}^i \exp\l( \xi^2_{( j)}/4 \r) \leq \sum_{j=1}^n \exp\l( \xi^2_{ j}/4 \r).
\]
On the other hand, for $t^2\geq 1$
\[
\hat i \exp\l(t^2 \lambda^2_{\hat i}/4\r) = \hat i\l(en/ \hat i \r)^{t^2} \geq  n e^{t^2}.
\]
Therefore,
\[
\E\l( \underset{i=1,\dots,n}{\max} \frac{|\xi|_{(i)}}{\lambda_i}\r) \leq 1 +  \int_1^{\infty} \Prob\l( \sum_{j=1}^n \exp\l( \xi^2_{ j}/4 \r) \geq n e^{t^2} \r) \mathrm{d}t.
\]
Since $\E\l(\exp\l( \xi^2_{ j}/4 \r)\r) \leq 5$, we conclude using Markov's inequality that
\[
\E\l( \underset{i=1,\dots,n}{\max} \frac{|\xi|_{(i)}}{\lambda_i}\r) \leq 1 +  5\int_1^{\infty}  e^{-t^2} \mathrm{d}t\leq 10.
\]
Re-scaling $\lambda_i$ by $2$ yields the result.
\end{proof}

%#######################
\section{Proofs of the main results.}
\label{app:main}
%#######################

The proof of the lower bound in inspired by results in \cite{comminges2021adaptive} where the goal was to estimate the nuisance parameter $\theta$ rather than the signal $\beta$ itself.
%%########################
\subsection{Proof of Theorem \ref{thm:lower}.}
\label{app:lower_bound}
%%########################
%\subsection{Proof of Theorem \ref{thm:lower}.}
%\label{sec:proof lower bound}
Assume that the first column $v$ of $X$ is such that $v\in \{\pm1\}^n$. Let us choose $\beta$ proportional to the canonical basis vector $e_1$ (recall that $\beta$ is sparse) such that $X\beta = \frac{\|X\beta\|_2}{\sqrt{n}}v$. Moreover, let $\xi$ be a vector of i.i.d. Rademacher random variables. Clearly, $P_\xi \in \m P_{\tau,1}$ for all values of $\tau$. The vector $\theta$ will be chosen to be random with i.i.d entries such that
$
\theta_i = \sigma \l(\frac{o}{n}\r)^{-1/\tau}\alpha_i v_{i},
$ where $\alpha_i$ are i.i.d. Bernoulli random variables with parameter $o/n$. Therefore,
\[
\E(\theta_i) = \sigma \l(\frac{o}{n}\r)^{1-1/\tau}v_{i} 
\text{  and  } \var(\theta_i) = \sigma^2 \l(\frac{o}{n}\r)^{1-2/ \tau}\l(1-\l(\frac{o}{n}\r)^{1-2/ \tau}\r) \leq \sigma^2\l(\frac{o}{n}\r)^{1-2/ \tau}.
\]
Notice that $\theta$ is not exactly of sparsity less than $o$ but we will deal with this technicality exactly as in \cite{comminges2021adaptive}. Finally, set 
\[
\frac{\|X\beta\|_2}{\sqrt{n}} = \sigma \l(\frac{o}{n}\r)^{1-1/\tau}.
\]
Notice that 
\[
Y_i = (X\beta - \theta +\sigma \xi)_i = -(\theta_i - \E(\theta)_i)v_i + \sigma \xi_i.
\]
Hence, the distributions of $Y$ defined by the model corresponding to $(\beta,-\theta,\sigma,P_\xi)$ and $(0,0,\tilde{\sigma},\tilde{P}_\xi)$ are identical.
%\Prob_{\beta,-\theta,\sigma,P_\xi} = \Prob_{0,0,\tilde{\sigma},\tilde{P}_\xi}
Here, $\tilde{\sigma}^2 = \sigma^2(1+ \l(\frac{o}{n}\r)^{1-2/\tau}(1-\l(\frac{o}{n}\r)^{1-2/\tau})) \sim \sigma^2$ and $\tilde{P}_\xi$ is the distribution of $\zeta = \sigma \l(\frac{o}{n}\r)^{-1/\tau}((\alpha_i - \l(\frac{o}{n}\r))v_i + \l(\frac{o}{n}\r)^{\tau}\xi_i)/\tilde{\sigma}$. Notice that $|\zeta| \geq 2$ only if $\alpha_i = 1$, hence for all $2 \leq t \leq (o/n)^{ - 1/ \tau}$ we have that
\[
\Prob( |\zeta| \geq t) = o/n \leq \l(\frac{1}{t}\r)^{\tau},
\]
and for $t > \l(\frac{o}{n}\r)^{ - 1/ \tau}$,
\[
\Prob( |\zeta| \geq t) = 0 \leq \l(\frac{1}{t}\r)^{\tau}.
\]
Therefore, $\tilde{P}_\xi \in \m P_{\tau,1}$. Let us denote 
\[
\mathcal{R}^* = \underset{\hat \beta}{\inf}\underset{|\beta|_0 \leq s}{\sup}\underset{\mbox{ }|\theta|_0 \leq o}{\sup}\underset{\mbox{ }\sigma>0}{\sup} \underset{\mbox{ } P_\xi \in \m P_{\tau,1}}{\sup} \Prob_{(\beta,\theta,\sigma,P_\xi)}\l(\|X(\hat{\beta} - \beta)\|^2/n \geq  \sigma^2/16 \l(\frac{o}{n}\r)^{2-2/\tau}\r).
\]
It is easy to notice that
\ml{
\mathcal{R}^* \geq \underset{\hat T}{\inf} \l(   \Prob_{(0,0,\tilde{\sigma},\tilde{P}_\xi)}\l(|\hat{T}| \geq  \tilde{\sigma}/4 \l(\frac{o}{n}\r)^{1-1/\tau}\r) \r. 
\\
\l.\bigvee \Prob_{(\beta,-\theta,\sigma,P_\xi)}\l(\l|\hat{T} - \frac{\|X\beta\|_2}{\sqrt{n}}\r| \geq  \sigma/4 \l(\frac{o}{n}\r)^{1-1/\tau}\r) \r),
}
where $\hat T$ is an estimator of $\frac{\|X\beta\|_2}{\sqrt n}$. 
Since $\tilde{\sigma} \geq \sigma$ and the distributions $\Prob_{(0,0,\tilde{\sigma},\tilde{P}_\xi)}, \ \Prob_{(\beta,-\theta,\sigma,P_\xi)}$ are equal, we deduce that
\[
\mathcal{R}^* \geq \underset{\hat T}{\inf} \l(   \Prob\l(|\hat{T}| \geq  \tilde{\sigma}/4 \l(\frac{o}{n}\r)^{1-1/\tau}\r)\vee \Prob\l(|\hat{T}| \leq  \tilde{\sigma}/4 \l(\frac{o}{n}\r)^{1-1/\tau}\r) \r),
\]
as long as $ \frac{\|X\beta\|}{\sqrt{n}} \geq \frac{\tilde{\sigma}\l(\frac{o}{n}\r)^{1-1/\tau}}{2}$. The last condition is satisfied since $\tilde{\sigma} \leq 2\sigma$. We conclude that
\[
\mathcal{R}^* \geq 1/2.
\]
In the case of sub-Gaussian noise, we choose $\theta_i = \sigma\sqrt{\log\l(\frac{o}{n}\r)} \alpha_i v_i $ and follow the same argument.

%######################################################
\subsection{Proof of Theorem \ref{thm:sub-gaussian}.}
\label{app:sub-Gaussian}
%######################################################
%\subsection{Proof of Theorem \ref{thm:sub-gaussian}.}
%\label{sec:proof sub-Gaussian}
We start with the property given by the inequality \eqref{eq:property1}. We will show first that
for all vectors $u$,
\[
\frac{\|Xu\|^2_2}{n}\geq \frac{1}{2} \|u\|_\Sigma^2 - \|u\|^2_\lambda/4,
\]
where $\|u\|_\Sigma^2 = u^\top \Sigma u$. Define $\tilde{X}$such that $X = \tilde{X}\Sigma^{1/2}$, let $A$ be the set  
 \[
 A = \l\{ u: \ \| u \|_\Sigma^2 \geq  \|u \|^2_\lambda /2 \r\},
 \]
 and $K = \{  \Sigma^{1/2}u, \ u \in A\}$. Moreover, we will denote the sphere of radius $1$ in $\mb R^p$ via $S^{p-1}$. 
 Since $\tilde{X}$ is isotropic, Corollary $1.5$ in \cite{liaw2017simple} implies that for any $h \geq 0$ and for all $v \in K\cap S^{p-1}$
 \[
 \frac{\|\tilde{X}v\|_2}{\sqrt{n}} \geq 1 - C'\l(\frac{\omega(K\cap S^{p-1}) + h}{\sqrt n}\r),
 \]
 with probability at least $1-e^{-ch^2}$. Here, $C'$ is a positive absolute constant and $\omega(T)$ corresponds to the Gaussian mean width of $T\subseteq \mb R^p$ defined via 
 \[
 \omega(T) = \mb E\sup_{u,v\in T} \langle \xi, u-v \rangle
 \] 
where $\xi$ has standard normal law. It is clear that 
 \[
 \omega(K\cap S^{p-1}) = \E\l(\underset{ v \in K\cap S^{p-1}}{\sup} \xi^\top v\r) = \E\l(\underset{ u \in A}{\sup} \; \xi_\Sigma^\top u / \|u\|_{\Sigma}\r),
 \]
where $\xi_\Sigma$ is a centered Gaussian random vector with covariance $\Sigma$. Therefore,
\ben{
 \label{eq:control_max}
       \omega(K\cap S^{p-1}) \leq \E\l( \underset{i}{\max} \frac{|\xi_\Sigma|_{(i)}}{\lambda_i} \r) \underset{ u \in A}{\sup} \|u\|_\lambda/\|u\|_{\Sigma}.
}
Since diagonal elements of $\Sigma$ do not exceed $1$ and $\lambda_i = C\sqrt{\frac{\log(ep/i)}{n}}, \ i=1,\ldots,p,$ we deduce from the bound of Lemma \ref{lem:control_max} that whenever $C$ in the definition of $\lambda$ is large enough,
 \[
 \omega(K\cap S^{p-1}) \leq \frac{1}{10C'}.
 \]
 Hence, for all $u \in A$
 \[
   \frac{\|Xu\|_2}{\sqrt{n}} \geq \|u\|_\Sigma/\sqrt{2},
 \]
 with probability at least $1-e^{-cn}$. This concludes the first part of the proof, since the inequality is always true for $u \not\in A$. We will now prove inequality \eqref{eq:property3}. Fix $v\in \mb R^p$. We want to show that for all $u\in \mb R^p$, 
    \[
   \frac{1}{\sqrt{n}} |v^\top X u| \leq \|u\|_\lambda \|v\|_2/10 + C \sqrt{\frac{1+\log(1/\delta)}{n}}\|u\|_\Sigma\|v\|_2.
    \]
   This is equivalent to establishing that for all $u\in \mb R^p$, 
   \[
   \frac{1}{\sqrt{n}} \xi^\top \Sigma^{1/2} u \leq \|u\|_\lambda /10 + C \sqrt{\frac{1+\log(1/\delta)}{n}}\|u\|_\Sigma,
    \]
    where $\xi$ is an isotropic sub-Gaussian vector. For $\alpha>0$, let $A_\alpha$ be the set
    \[
    A_\alpha = \{ u: \  \|u\|_\Sigma = 1, \quad \|u\|_\lambda \leq \alpha \},
    \]
    and let $K_\alpha$ be the set $K_\alpha = \{\Sigma^{1/2}u , \ u\in A_\alpha \}$. Notice that $K_\alpha \subset S^{p-1}$ and that 
    \[
    \underset{u \in A_\alpha }{\sup} \frac{1}{\sqrt{n}} \xi^\top \Sigma^{1/2} u = \underset{v \in K_\alpha }{\sup} \frac{1}{\sqrt{n}} \xi^\top v. 
    \]
    Hence, applying Theorem 4.1 in \cite{liaw2017simple} on $K_\alpha$, we get that
    \[
    \underset{u \in A_\alpha }{\sup} \frac{1}{\sqrt{n}} \xi^\top \Sigma^{1/2} u \leq C'\l( \E\l( \underset{u \in A_\alpha }{\sup} \frac{1}{\sqrt{n}} {\xi}^\top \Sigma^{1/2} u \r) + \sqrt{\frac{\log(1/\delta)}{n}} \r)
    \]
    for some $C'>0$ with probability $1-\delta$, where ${\xi}$ is a standard Gaussian random vector. Using the bound \eqref{eq:control_max} we deduce that the inequality
    \[
    \underset{u \in A_\alpha }{\sup} \frac{1}{\sqrt{n}} \xi^\top \Sigma^{1/2} u \leq \alpha/20 + C'\sqrt{\frac{\log(1/\delta)}{n}}
    \]
holds with probability at least $1-\delta$. We can now conclude, using the peeling argument as in \cite[Lemma 5]{dalalyan2019outlier} that
 \[
 \underset{\|u\|_\Sigma = 1  }{\sup} \frac{1}{\sqrt{n}} \xi^\top \Sigma^{1/2} u \leq \|u\|_\lambda /10 + C'\sqrt{\frac{1+\log(1/\delta)}{n}},
 \]
 again with probability at least $1-\delta$. The proof is complete by homogeneity of the norm. For the remaining part of the proof, we need to show that for all $u,v$
    \[
   \frac{1}{\sqrt{n}} |v^\top \tilde{X}\Sigma^{1/2} u| \leq \|u\|_\lambda \|v\|_2/10 + \|v\|_\lambda \|u\|_\Sigma/10 + C \sqrt{\frac{1+\log(1/\delta)}{n}}\|u\|_\Sigma\|v\|_2,
    \]
    with probability at least $1-\delta$. For $\alpha,\beta>0$, let $A_\alpha$ and $B_\beta$ be the sets
    \[
    A_\alpha = \{ u: \   \|u\|_\Sigma = 1, \ \|u\|_\lambda \leq \alpha \},
    \]
    and
    \[
    B_\beta = \{ v: \  \|v\|_2 = 1, \ \|v\|_\lambda \leq \beta \},
    \]
    and let $K_\alpha$ be the set $K_\alpha = \{\Sigma^{1/2}u , \ u\in A_\alpha \}$. Notice that $K_\alpha \subset S^{p-1}$ and that 
    \[
    \underset{(u,v) \in A_\alpha \times B_\beta }{\sup} \frac{1}{\sqrt{n}} v^\top \tilde{X}\Sigma^{1/2} u= \underset{(b,v) \in K_\alpha \times B_\beta }{\sup} \frac{1}{\sqrt{n}} v^\top \tilde{X}b. 
    \]
    Let us denote by $Z_{v,b}$ the sub-Gaussian process $v^\top \tilde{X}b$ where $v,b$ are both of norm $1$. We see that 
\ml{
\E\l( Z_{v,b} - Z_{v',b'}  \r)^2 = 2(1 - \langle v, v' \rangle   \langle b,b' \rangle) 
\leq 4 - 2(\langle v, v' \rangle  + \langle b,b' \rangle)
\\
\leq \|v - v'\|^2 + \|b - b'\|^2.
}
    Therefore,
    \[
    \E\l( Z_{v,b} - Z_{v',b'}  \r)^2 \leq \E\l( \xi^\top(v-v')   \r)^2 + \E\l( \tilde{\xi}^\top(b-b')   \r)^2,
    \]
    where $\xi $ and $\tilde{\xi}$ are both standard Gaussian vectors.
    Applying Theorem 4.1 in \cite{liaw2017simple} on $K_\alpha \times B_\beta$, we deduce that 
    \ml{
    \underset{(u,v) \in A_\alpha \times B_\beta }{\sup} \frac{1}{\sqrt{n}} v^\top \tilde{X}\Sigma^{1/2} u \leq C'\l( \E\l( \underset{u \in A_\alpha }{\sup} \frac{1}{\sqrt{n}} \tilde{\xi}^\top \Sigma^{1/2} u \r) + \E\l( \underset{v \in B_\alpha }{\sup} \frac{1}{\sqrt{n}} \tilde{\xi}^\top  v \r)\r.
    \\
    \l.+ \sqrt{\frac{\log(1/\delta)}{n}} \r)
    }
    for some $C'>0$ with probability at least $1-\delta$. 
    %where $\tilde{\xi}$ is a standard Gaussian random vector. 
   Using the inequality \eqref{eq:control_max}, we get that
    \[
    \underset{(u,v) \in A_\alpha \times B_\beta }{\sup} \frac{1}{\sqrt{n}} v^\top \tilde{X}\Sigma^{1/2} u \leq \alpha/20 + \beta/20 + C'\sqrt{\frac{\log(1/\delta)}{n}},
    \]
 with probability at least $1-\delta$. We can now conclude, using the double-peeling argument as in \cite[Lemma 6]{dalalyan2019outlier} that
 \[
 \underset{\|u\|_\Sigma = 1, \|v\|_2=1  }{\sup}  \frac{1}{\sqrt{n}}v^\top \tilde{X}\Sigma^{1/2} u\leq \|u\|_\lambda /10+ \|v\|_\lambda /10 + C'\sqrt{\frac{1+\log(1/\delta)}{n}},
 \] 
 with probability at least $1-\delta$. The desired result now follows by homogeneity of the norm.

\begin{proposition}
\label{prop:RE}
 Let $X$ be a matrix with sub-Gaussian rows. Then for all vectors $u,v$
    \[
   \|Xu/\sqrt{n} + v\|^2_2\geq \frac{1}{8}  \|u\|_\Sigma^2 + \frac{1}{8}\|v\|_2^2 - \|u\|^2_\lambda/2 - \|v\|^2_\mu/2,
    \]
    with probability at least $1-e^{-cn}$.
    \end{proposition}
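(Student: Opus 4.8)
The plan is to expand the square and reduce the claim to the two design properties \eqref{eq:property1} and \eqref{eq:property2}, both already established in the proof of Theorem~\ref{thm:sub-gaussian}, and then close with elementary Young-type inequalities. Writing
\[
\left\| \tfrac{1}{\sqrt{n}} Xu + v \right\|_2^2 = \tfrac{1}{n}\|Xu\|_2^2 + \|v\|_2^2 + \tfrac{2}{\sqrt{n}} v^\top X u,
\]
I would lower-bound the first term via \eqref{eq:property1}, i.e. $\tfrac1n\|Xu\|_2^2 \ge \tfrac12\|u\|_\Sigma^2 - \tfrac14\|u\|_\lambda^2$, which holds on an event of probability at least $1-e^{-cn}$. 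The middle term is kept as is, so the only delicate object is the cross term $\tfrac{2}{\sqrt{n}} v^\top X u$, which is precisely what \eqref{eq:property2} controls uniformly over all $(u,v)$ (here $v$ ranging over $\mathbb{R}^n$, as in the derivation of \eqref{eq:property2}).

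The key point is that \eqref{eq:property2} is valid with probability $1-\delta$ for every $\delta$, with the factor $C'\sqrt{(\log(1/\delta)+1)/n}$ multiplying $\|u\|_\Sigma\|v\|_2$. Choosing $\delta = e^{-cn}$ turns this factor into $C'\sqrt{c + 1/n}$, which can be forced below $\tfrac18$ by taking the absolute constant $c$ sufficiently small (and $n$ above an absolute constant). Hence, on an event of probability at least $1-e^{-cn}$,
\[
\tfrac{2}{\sqrt{n}} v^\top X u \ge -\tfrac15\|u\|_\lambda\|v\|_2 - \tfrac15\|v\|_\lambda\|u\|_\Sigma - \tfrac14\|u\|_\Sigma\|v\|_2.
\]
Since $\lambda_i \le \mu_i$, I would upgrade $\|v\|_\lambda$ to the larger $\|v\|_\mu$ here, so that the penalty matches the one in the statement.

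It then remains to absorb the three bilinear terms by $xy \le \tfrac{t}{2}x^2 + \tfrac{1}{2t}y^2$ with weights tuned so that $\tfrac15\|u\|_\lambda\|v\|_2$ and $\tfrac15\|v\|_\mu\|u\|_\Sigma$ each release $\tfrac18$ of $\|v\|_2^2$ (resp. $\|u\|_\Sigma^2$) plus a $\tfrac{2}{25}$-multiple of the relevant penalty, while $\tfrac14\|u\|_\Sigma\|v\|_2 \le \tfrac18(\|u\|_\Sigma^2 + \|v\|_2^2)$. A one-line bookkeeping check then shows the surviving coefficients on $\|u\|_\Sigma^2$ and $\|v\|_2^2$ are at least $\tfrac14$ and $\tfrac34$ respectively (hence $\ge \tfrac18$), while the accumulated penalty coefficients stay above $-\tfrac12$, which is exactly the asserted inequality; intersecting the two events and relabeling $c$ preserves the probability $1-e^{-cn}$.

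I expect the cross term to be the only real obstacle: the crucial insight is that the explicit $\sqrt{\log(1/\delta)/n}$ dependence in \eqref{eq:property2} makes uniform, exponentially-confident control over the $n$-dimensional direction $v$ available essentially for free, after which the remaining algebra is routine. The one assumption to keep in view is $\lambda_i \le \mu_i$, which is what legitimizes passing from $\|v\|_\lambda$ to $\|v\|_\mu$.
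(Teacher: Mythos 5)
Your proposal is correct and follows essentially the same route as the paper: the paper's own (two-line) proof expands the square, bounds $\|Xu\|_2^2/n$ via \eqref{eq:property1}, and controls the cross term via \eqref{eq:property2} with $\delta=e^{-cn}$, exactly as you do. Your Young-inequality bookkeeping (and the explicit use of $\lambda_i\le\mu_i$ to pass from $\|v\|_\lambda$ to $\|v\|_\mu$) simply fills in details the paper leaves implicit.
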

\begin{proof}
We have
\[
\|Xu/\sqrt{n} + v\|^2_2\geq \|Xu/\sqrt{n} \|^2_2 + \| v\|^2_2 - \frac{2}{\sqrt{n}} |v^\top X u|.
\]
We conclude using the inequalities \eqref{eq:property1} and \eqref{eq:property2} with $\delta = e^{-cn}$ for $c$ small enough.
\end{proof}

	\begin{comment}

\begin{remark}
\label{remark:fixed design}
It is easy to see that whenever $\log(1/\delta)/n\leq c$ for $c$ small enough, $\|u\|_\Sigma$ can be replaced by $\frac{\|Xu\|_2}{\sqrt n}$ in conditions \eqref{eq:property2} and \eqref{eq:property3}. Indeed, it follows from the inequality \eqref{eq:property1} stated as $\|u\|_\Sigma^2\leq \frac{2\|Xu\|^2_2}{n} + \|u\|^2_\lambda/2$ and combined with \eqref{eq:property2} and \eqref{eq:property3}. 
Therefore, the claim of Proposition \ref{prop:RE} also holds with $\frac{\|Xu\|_2}{\sqrt n}$ in place of $\|u\|_\Sigma$.
\end{remark}
	
	\end{comment}

%########################################################
\subsection{Proof of Theorem \ref{thm: sqrt lasso bound}.}
\label{proof of thm: sqrt lasso bound 2}
%########################################################

%In this section, we present the proofs that were omitted from the main exposition. 
Throughout this section, we set $\Delta^\beta = \wh{\beta}-\beta^*$, $\Delta^\theta = \wh{\theta}-\theta^*$ and let $\Delta = [\Delta^\beta; \Delta^\theta] \in \mb{R}^{p+n}$ be the augmented error vector. 
We also introduce the following additional notation with the goal of simplifying the expressions; recall that 
\[
Q(\beta,\theta) := \frac{1}{2n}\l\|Y-X\beta-\sqrt{n}\theta\r\|_2^2,
\] 
and let 
%; these notations are only used in this proof only.
\begin{itemize}
	\item $\wh{Q}:=Q(\wh{\beta},\wh{\theta})$ and $Q^*:= Q(\beta^*,\theta^*)$;
	\item $A^{(n)} := \frac{1}{\sqrt{n}} A$, whenever $A$ is a scalar, vector or a matrix;
	%\item Given vectors $u\in \mb{R}^n$ and $v \in \mb{R}^p$, we denote $[u;v]$ to be the $n\times p$ vector constructed by putting $v$ below $u$. 
	\item $\wh{\xi} := Y - X \wh{\beta} - \sqrt{n}\wh{\theta}$.
\end{itemize}
Moreover, note that $\wh{Q} =  \frac{1}{2n} \l\| \wh{\xi} \r\|_2^2 $ and $Q^* = \frac{1}{2n} \l\|\xi \r\|_2^2$. In the rest of the proof we assume that the event
\[
\mathcal{E} = \l\{  n/10 \leq \sum_{j\geq o'} \xi^2_{(j)}\leq 2 n  \text{ and }  \forall j\geq o', |\xi|_{(j)} \leq \sqrt{n} \mu_{(j)}/20 \r\}
\]
occurs and that Properties $1,2$ and $3$ of the design matrix hold as well.

Given $o'> o$, let us replace the dense noise vector $\xi$ by the new vector obtained from $\xi$ by replacing the largest $o'$ coordinates by $0$. We will treat the largest $o'$ coordinates removed from $\xi$ as a subset of adversarial outliers, and will  replace the corruption parameter $o$ by $2o'$. From now on, we can assume that the entries of the ``new'' noise vector $\xi$ are bounded by $|\xi|_{(o')}$.

%In this subsection we present the proof of Theorem \ref{thm: sqrt lasso bound 2}.  
Let us note that several steps of the proof below follow the argument in \cite{dalalyan2019outlier}.
First, recall that $S = \big\{j: \beta^*_j \neq 0\big\}$, $O = \big\{j: \theta^*_j \neq 0\big\}$ and $s= \card(S)$, $ \card(O) \leq 2 o' $. The following lemma ensures that the augmented error vector $\Delta$ belongs to %a dimension reduction 
the cone defined in \eqref{def:dimension reduction cone}, with $c_0=4$.
%======Sparsity lemma ===========
\begin{lemma}
\label{lemma:sparsity inequality}
The following inequality holds:
\[
\|\Delta^\beta\|_\lambda  + \|\Delta^\theta\|_\mu  \leq 4\sqrt{\frac{\sum_{i=1}^s \lambda_i^2}{\kappa(s)} + \frac{\log(1/\delta)}{n}} \|\Delta^\beta\|_\Sigma + 4\sqrt{\sum_{i=1}^{o'} \mu_i^2} \|\Delta^\theta\|_2.
\]
Equivalently, $\Delta \in \mathcal{C}\l(4,s,o',\delta,\Sigma \r)$.
\end{lemma}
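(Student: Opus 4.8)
The plan is to start from the optimality of $(\wh\beta,\wh\theta)$ and convert the defining inequality $L(\wh\beta,\wh\theta)\le L(\beta^*,\theta^*)$ into a statement about the penalties only. Writing $f(\beta,\theta)=Q(\beta,\theta)^{1/2}=\frac{1}{\sqrt{2n}}\|Y-X\beta-\sqrt n\theta\|_2$, which is convex (a norm composed with an affine map), optimality gives $\|\wh\beta\|_\lambda+\|\wh\theta\|_\mu-\|\beta^*\|_\lambda-\|\theta^*\|_\mu\le f(\beta^*,\theta^*)-f(\wh\beta,\wh\theta)$, and convexity upgrades the right-hand side to the linear term $-\langle\nabla f(\beta^*,\theta^*),\Delta\rangle$. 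After the outlier-merging reduction (so that the residual at the truth is $\sigma\eta$, with $\eta$ the light-tailed part of $\xi$ whose top $o'$ entries have been absorbed into an effective outlier vector of support $\le 2o'$), this gradient equals $\frac{1}{\sqrt{2n}\,\|\eta\|_2}\big(\eta^\top X\Delta^\beta+\sqrt n\,\eta^\top\Delta^\theta\big)$, so the first task is to control these two inner products.

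For the $\beta$-piece I would condition on $\xi$ (hence on $\eta$), which is legitimate since $\xi\perp X$, and apply the incoherence bound \eqref{eq:property3} with the now-fixed vector $v=\eta$; dividing by $\|\eta\|_2$ turns $\frac{1}{\sqrt n}|\eta^\top X\Delta^\beta|$ into $\|\Delta^\beta\|_\lambda/10+C'\sqrt{(\log(1/\delta)+1)/n}\,\|\Delta^\beta\|_\Sigma$. For the $\theta$-piece I would use the rearrangement inequality $\eta^\top\Delta^\theta\le\sum_j|\eta|_{(j)}|\Delta^\theta|_{(j)}$ together with the event $\mathcal E$ and Lemma \ref{lem:quantile}, which give $|\eta|_{(j)}\le \sqrt n\,\mu_j/20$ and $\|\eta\|_2\gtrsim\sqrt n$; this bounds the $\theta$-piece by a small multiple of $\|\Delta^\theta\|_\mu$. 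Collecting these yields
\[
\|\wh\beta\|_\lambda+\|\wh\theta\|_\mu-\|\beta^*\|_\lambda-\|\theta^*\|_\mu\le \tfrac{1}{10\sqrt2}\|\Delta^\beta\|_\lambda+\tfrac{C'}{\sqrt2}\sqrt{\tfrac{\log(1/\delta)+1}{n}}\,\|\Delta^\beta\|_\Sigma+\tfrac{1}{2\sqrt2}\|\Delta^\theta\|_\mu.
\]

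Next I would lower-bound the left-hand side by the standard sorted-$\ell_1$ inequality: for a vector supported on a set of size $k$ one has $\|w+u\|_\eta-\|w\|_\eta\ge\|u\|_\eta-2\sqrt{\sum_{i\le k}\eta_i^2}\,\|u\|_2$ via Cauchy--Schwarz on the top-$k$ coordinates. Applying this with $k=s$ for $\beta^*$ and $k\le 2o'$ for the merged outlier vector produces $\|\Delta^\beta\|_\lambda-2\sqrt{\sum_{i\le s}\lambda_i^2}\,\|\Delta^\beta\|_2$ and $\|\Delta^\theta\|_\mu-2\sqrt{\sum_{i\le 2o'}\mu_i^2}\,\|\Delta^\theta\|_2$. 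Substituting and moving the small constant multiples of $\|\Delta^\beta\|_\lambda$ and $\|\Delta^\theta\|_\mu$ (coefficients $\tfrac{1}{10\sqrt2}$ and $\tfrac{1}{2\sqrt2}$, both well below one) to the left, and using $\sum_{i\le 2o'}\mu_i^2\le 2\sum_{i\le o'}\mu_i^2$, gives the intermediate cone inequality
\[
\|\Delta^\beta\|_\lambda+\|\Delta^\theta\|_\mu\le C\Big(\sqrt{\textstyle\sum_{i\le s}\lambda_i^2}\,\|\Delta^\beta\|_2+\sqrt{\textstyle\sum_{i\le o'}\mu_i^2}\,\|\Delta^\theta\|_2+\sqrt{\tfrac{\log(1/\delta)+1}{n}}\,\|\Delta^\beta\|_\Sigma\Big).
\]

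The final and most delicate step is upgrading $\|\Delta^\beta\|_2$ to $\|\Delta^\beta\|_\Sigma$ with the factor $1/\sqrt{\kappa(s)}$: the restricted-eigenvalue bound $\|u\|_\Sigma^2\ge\kappa(s)\|u\|_2^2$ holds only on $\mathcal C(s,4)$, whereas the inequality just derived couples $\Delta^\beta$ with $\Delta^\theta$, so one cannot simply assert $\Delta^\beta\in\mathcal C(s,4)$. I would resolve this by a dichotomy. If $\Delta^\beta\in\mathcal C(s,4)$, the restricted-eigenvalue condition applies and $\sqrt{\sum_{i\le s}\lambda_i^2}\,\|\Delta^\beta\|_2\le\sqrt{\sum_{i\le s}\lambda_i^2/\kappa(s)}\,\|\Delta^\beta\|_\Sigma$; the residual term then merges into $\sqrt{\sum_{i\le s}\lambda_i^2/\kappa(s)+\log(1/\delta)/n}\,\|\Delta^\beta\|_\Sigma$ via $\sqrt a+\sqrt b\le\sqrt2\sqrt{a+b}$, yielding the claimed cone with $c_0=4$. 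If instead $\Delta^\beta\notin\mathcal C(s,4)$, then $\sqrt{\sum_{i\le s}\lambda_i^2}\,\|\Delta^\beta\|_2<\tfrac14\|\Delta^\beta\|_\lambda$, so this term is absorbed back into the left-hand side and the remaining $\|\Delta^\theta\|_2$ and $\|\Delta^\beta\|_\Sigma$ terms already furnish an inequality of the required form, with no restricted-eigenvalue input. The real effort is thus in the bookkeeping: tracking the absolute constants through both branches so that they close at exactly $c_0=4$, and correctly accounting for the outlier-merging reduction in the normalization $\|\eta\|_2\asymp\sqrt n$ and in the effective sparsity $2o'$ of $\theta$.
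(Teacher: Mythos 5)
Your proposal is correct and follows essentially the same route as the paper's proof: optimality of $(\wh\beta,\wh\theta)$ plus convexity of $Q^{1/2}$ reduced to the subgradient at $(\beta^*,\theta^*)$, the inner products controlled by property \eqref{eq:property3} and the event $\mathcal{E}$ (with $\|\xi\|_2\gtrsim\sqrt n$ for the normalization), the penalty differences handled by the sorted-$\ell_1$ inequality of Bellec et al., and the same dichotomy on $\Delta^\beta\in\mathcal C(s,4)$ to invoke the restricted-eigenvalue condition. The only differences are cosmetic rearrangements of the same inequalities and the explicit constant tracking, which the paper carries out to land at $c_0=4$.
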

%===============================

%======Proof of Sparsity lemma =========
\begin{proof}[Proof of Lemma \ref{lemma:sparsity inequality}]
By the definition of $\wh{\beta},\wh{\theta}$, we have that
%label 1
\begin{equation}\label{lemma 4.1 proof eq1}
\wh Q^{\frac{1}{2}} - {Q^\ast}^{\frac{1}{2}} \leq \l(\l\|\beta^*\r\|_\lambda - \l\|\wh\beta\r\|_\lambda\r) + \l( \l\|\theta^*\r\|_\mu-\l\|\wh\theta\r\|_\mu \r).
\end{equation}
Using Lemma A.1 in \cite{bellec2018slope}, we get that
\[
\l(\l\|\beta^*\r\|_\lambda - \l\|\wh\beta\r\|_\lambda\r) \leq 2\sqrt{\sum_{i=1}^s \lambda_i^2} \|\Delta^\beta\|_2 - \|\Delta^\beta\|_\lambda.
\]
If $\Delta^\beta \in \mathcal{C}(s,4) $, then 
\[
\l(\l\|\beta^*\r\|_\lambda - \l\|\wh\beta\r\|_\lambda\r) \leq 2\sqrt{\frac{\sum_{i=1}^s \lambda_i^2}{\kappa(s)}} \|\Delta^\beta\|_\Sigma - \|\Delta^\beta\|_\lambda.
\]
Otherwise,
\[
\l(\l\|\beta^*\r\|_\lambda - \l\|\wh\beta\r\|_\lambda\r) \leq - \|\Delta^\beta\|_\lambda/2.
\]
In both cases we have that
\begin{equation}
\l(\l\|\beta^*\r\|_\lambda - \l\|\wh\beta\r\|_\lambda\r) \leq 2\sqrt{\frac{\sum_{i=1}^s \lambda_i^2}{\kappa(s)}} \|\Delta^\beta\|_\Sigma - \|\Delta^\beta\|_\lambda/2.
\end{equation}
Similarly, we observe that
\[
\l(\l\|\theta^*\r\|_\mu - \l\|\wh\theta\r\|_\mu\r) \leq 2\sqrt{\sum_{i=1}^{o'} \mu_i^2} \|\Delta^\theta\|_2 - \|\Delta^\theta\|_\mu.
\]
Combining the inequalities above with \eqref{lemma 4.1 proof eq1}, we deduce that
\begin{multline}
\label{Q_hat_Q_star_upper_bound}
\wh Q^{\frac{1}{2}} - {Q^\ast}^{\frac{1}{2}} \leq 2\sqrt{\frac{\sum_{i=1}^s \lambda_i^2}{\kappa(s)}} \|\Delta^\beta\|_\Sigma + 2\sqrt{\sum_{i=1}^{o'} \mu_i^2} \|\Delta^\theta\|_2 - (\|\Delta^\beta\|_\lambda + \|\Delta^\theta\|_\mu)/2.
\end{multline}
Recall that the property \eqref{eq:property3} of sub-Gaussian designs together with the inequality $\|\xi\| \leq 2\sqrt{n}$ yields the bound
\begin{equation}
\label{eq:test arxiv 1}
      \frac{1}{n} |\xi^\top X \Delta^\beta| \leq \|\Delta^\beta\|_\lambda/10  +  2\sqrt{\frac{\log(1/\delta)}{n}}\|\Delta^\beta\|_\Sigma. \end{equation}
On the other hand, convexity of $Q(\beta,\theta)^{\frac{1}{2}}$ implies that
\[
\wh Q^{\frac{1}{2}} - {Q^\ast}^{\frac{1}{2}} 
\geq \dotp{\partial_\beta \big(Q^{\frac{1}{2}}\big)(\beta^*,\theta^*) }{\wh\beta-\beta^*} + \dotp{\partial_\theta\big(Q^{\frac{1}{2}}\big)(\beta^*,\theta^*)}{\wh\theta-\theta^*},
\]
where $\partial_\beta\big(Q^{\frac{1}{2}}\big)(\beta^*,\theta^*)$ $\Big($ or $\partial_\theta\big(Q^{\frac{1}{2}} \big)(\beta^*,\theta^*)\Big)$ represents the subgradient of $Q^{\frac{1}{2}}$ with respect to $\beta$ (or $\theta$), evaluated at the point $(\beta^*, \theta^*)$.
If $Q^\ast\neq 0$, we have that
\[
\partial_\beta\big(Q^{\frac{1}{2}}\big)(\beta^*,\theta^*) = -\frac{1}{2} {Q^\ast}^{-\frac12} \cdot \frac{1}{n} X^T \l( Y-X\beta^*- \sqrt{n}\theta^* \r)
\]
and
\[
\partial_\theta \big(Q^{\frac{1}{2}}\big)(\beta^*,\theta^*) = -\frac{1}{2} {Q^\ast}^{-\frac12} \cdot \frac{1}{\sqrt{n}} \l( Y-X\beta^*- \sqrt{n}\theta^* \r).
\]
Therefore,
\begin{multline*}
\wh Q^{\frac{1}{2}} - {Q^\ast}^{\frac{1}{2}} \geq -\frac{\frac{1}{n}\sum_{j=1}^{n}(y_j-X_j^T\beta^*-\sqrt{n}\theta_j^*)X_j^T(\wh\beta-\beta^*) }{2{Q^\ast}^{\frac{1}{2}}} \\
- \frac{\frac{1}{\sqrt{n}}\sum_{j=1}^{n}(y_j-X_j^T\beta^*-\sqrt{n}\theta_j^*)(\wh\theta_j-\theta_j^*)}{2{Q^\ast}^{\frac{1}{2}}} \\ 
\geq -\sigma \frac{|\xi^\top X \Delta^\beta|/n }{2Q(\beta^*,\theta^*)^{\frac{1}{2}}} - \sigma \frac{\frac{1}{\sqrt{n}}\max_{j\geq o'}(|\xi|_{(j)}/\mu_j)\l\|\wh\theta-\theta^*\r\|_\mu}{2{Q^\ast}^{\frac{1}{2}}} \\
\geq -\sigma \frac{(\|\Delta^\beta\|_\lambda  + \|\Delta^\theta\|_\mu)/10 + 2\sqrt{\frac{\log(1/\delta)}{n}}\|\Delta^\beta\|_\Sigma }{2{Q^\ast}^{\frac{1}{2}}},
\end{multline*}
where the last inequality follows from \eqref{eq:test arxiv 1} combined with that fact that 
\[
\max_{j\geq o'}(|\xi|_{(j)}/(\sqrt{n}\mu_j)) \leq 1/10.
\] 
We conclude that
%label 2 
\begin{equation}
\label{lemma 4.1 proof eq2}
2{Q^\ast}^{\frac{1}{2}} \l( \wh Q^{\frac{1}{2}} - {Q^\ast}^{\frac{1}{2}}\r)/\sigma \geq -(\|\Delta^\beta\|_\lambda  + \|\Delta^\theta\|_\mu)/10 - 2 \sqrt{\frac{\log(1/\delta)}{n}}\|\Delta^\beta\|_\Sigma.
\end{equation}
Recall that $\|\xi\|_2^2 \geq n/10$ on event $\mathcal{E}$. 
%This is usually the case without any moment assumptions.
Hence, in the view of the inequality \eqref{lemma 4.1 proof eq2}, we see that 
\begin{equation}
\label{Q_hat_Q_star_lower_bound}
 \l( \wh Q^{\frac{1}{2}} - {Q^\ast}^{\frac{1}{2}} \r) \geq -1/8\|\Delta^\beta\|_\lambda  - 1/8\|\Delta^\theta\|_\mu - 2\sqrt{\frac{\log(1/\delta)}{n}}\|\Delta^\beta\|_\Sigma.
\end{equation}
Combining the upper and the lower bounds above, we deduce the following result:
\[
\|\Delta^\beta\|_\lambda  + \|\Delta^\theta\|_\mu  \leq 4\sqrt{\frac{\sum_{i=1}^s \lambda_i^2}{\kappa(s)} + \frac{\log(1/\delta)}{n}} \|\Delta^\beta\|_\Sigma + 4\sqrt{\sum_{i=1}^{o'} \mu_i^2} \|\Delta^\theta\|_2 ,
\]
as claimed.
%If $Q(\beta^*,\theta^*)=0$, we take $w\in\mb{R}^{n+p}$ such that $\l\|w\r\|_2\leq 1$. Easy to verify that $w\in\partial \big(Q(\beta^*,\theta^*)^\frac{1}{2}\big)$, so one can repeat the above argument with $\lambda_s\geq 2$ and $\lambda_o\geq 2$ to deduce that 
%\[
%\l\|(\wh{\beta}-\beta^*)_{S^C}\r\|_1+ \l\|(\wh{\theta}-\theta^*)_{J^C}\r\|_1 \leq 3\Big(\l\|(\wh{\beta}-\beta^*)_{S}\r\|_1+ \l\|(\wh{\theta}-\theta^*)_{J}\r\|_1 \Big)
%\]
%as desired.
\end{proof}
%===============================

%===============================
%==========proof of simultaneous bounds for beta and theta===========

We are ready to proceed with the proof of the main result. Since the loss function $L(\beta,\theta)$ defined in \eqref{equation:sqrt lasso loss} is convex, the first-order optimality conditions ensure the existence of $\wh{v}\in \partial\l\|\wh{\beta}\r\|_1$, $\wh{u}\in\partial\l\|\wh{\theta}\r\|_1$ such that $\wh{v}^T\wh\beta = \l\|\wh\beta\r\|_\lambda$, $\wh{u}^T\wh\theta = \l\|\wh\theta\r\|_\mu$, and 
\begin{equation*}
0 = -\frac{\frac{1}{n}X^T(Y-X\wh{\beta}-\sqrt{n}\wh{\theta})}{2\wh{Q}^{\frac12}} +  \wh{v},
\end{equation*}
\begin{equation*}
0 = -\frac{\frac{1}{\sqrt{n}}(Y-X\wh{\beta}-\sqrt{n}\wh{\theta})}{2\wh{Q}^{\frac12}} +  \wh{u}
\end{equation*}
under the assumption that $\wh{Q} \neq 0$. The two equations above are equivalent to 
\begin{equation}\label{eq:kkt condition for sqrt lasso}
[X^{(n)}, I_n]^T(Y^{(n)} - X^{(n)}\wh{\beta} -\wh{\theta}) = 2\wh{Q}^{\frac12} \l[  \l( \wh{v} \r)^T, \l( \wh{u}\r)^T \r]^T.
\end{equation}
Note that when $\wh{Q} = 0$, we have that $\wh{\xi} = 0$ and hence \eqref{eq:kkt condition for sqrt lasso} is still valid. Next, recall that $Y^{(n)} = X^{(n)}\wh{\beta} + \wh{\theta} + \sigma \xi^{(n)}$, so by \eqref{eq:kkt condition for sqrt lasso},
\begin{equation*}
[X^{(n)}, I_n]^T[X^{(n)}, I_n]\Delta = \sigma [X^{(n)}, I_n]^T\xi^{(n)} - 2\wh{Q}^{\frac12} \l[ \l( \wh{v} \r)^T, \l( \wh{u}\r)^T \r]^T.
\end{equation*}
Multiplying both sides of this equation by $\Delta^T$ from the left, we get
\begin{multline}\label{eq:kkt after multiplying}
\l\| X^{(n)} \Delta^\beta + \Delta^\theta \r\|_2^2 = \sigma (\Delta^\beta)^T (X^{(n)})^T\xi^{(n)} + \sigma (\Delta^\theta)^T \xi^{(n)} 
- 2\wh{Q}^{\frac12}(\Delta^\beta)^T\wh{v} - 2\wh{Q}^{\frac12}(\Delta^\theta)^T \wh{u}.
\end{multline}
Note that $ \underset{i}{\max} (|\wh{v}|_{(i)}/ \lambda_i) \leq 1$ and $\wh{v}^T\wh\beta = \l\|\wh\beta\r\|_\lambda$, hence
\[
-(\Delta^\beta)^T\wh{v} = (\beta^*-\wh\beta)^T\wh{v} = (\beta^*)^T\wh{v} - \l\|\wh\beta\r\|_1 \leq \l\|\beta^*\r\|_\lambda - \l\|\wh\beta\r\|_\lambda.
\]  
Similarly, we can show that
\[
-(\Delta^\theta)^T\wh{u} \leq \l\|\theta^*\r\|_\mu - \l\|\wh{\theta}\r\|_\mu.
\]
Combining these bounds with equation \eqref{eq:kkt after multiplying} and noticing that $(Q^*)^{\frac12} = \frac{\sigma}{\sqrt{2n}}\l\|\xi\r\|_2$, we deduce that on event $\mathcal{E}$,
\begin{multline}\label{ineq: F_1_bound_step_1}
\l\| X^{(n)} \Delta^\beta + \Delta^\theta \r\|_2^2 \leq \sigma(\|\Delta^\beta\|_\lambda  + \|\Delta^\theta\|_\mu)/2 + \sigma\sqrt{\frac{\log(1/\delta)}{n}}\|\Delta^\beta\|_\Sigma \\
 + 2\wh{Q}^{\frac12}  \l( \l\|\beta^*\r\|_\lambda - \l\|\wh\beta\r\|_\lambda \r) +  2\wh{Q}^{\frac12}   \l( \l\|\theta^*\r\|_\mu - \l\|\wh\theta\r\|_\mu \r).
\end{multline}
For brevity, set $x = \l\| X^{(n)} \Delta^\beta + \Delta^\theta \r\|_2$.

Note that 
\[
\hat{Q}^{1/2} = \frac{1}{\sqrt{2n}}\| Y - X\hat{\beta} - \sqrt{n} \hat{\theta} \|_2 = \frac{1}{\sqrt{2}}\| \xi^*/\sqrt{n} - X^{(n)}\Delta^\beta - \Delta^\theta \|_2.
\]
Since  $\l(Q^*\r)^{1/2} = \frac{1}{\sqrt{2n}}\| \xi^*  \|_2$, we deduce that
\[
\hat{Q}^{1/2} \leq \l(Q^*\r)^{1/2} + x/\sqrt{2}. 
\]
 Therefore, we derive using the inequality \eqref{ineq: F_1_bound_step_1} that
\ben{
\label{equation:thm1 middle result 0}
x^2 \leq  (5\sigma + x)(\|\Delta^\beta\|_\lambda  + \|\Delta^\theta\|_\mu) + \sigma\sqrt{\frac{\log(1/\delta)}{n}}\|\Delta^\beta\|_\Sigma,
}
where in the last step we used the bound $\l\|u\r\|_\lambda - \l\|v\r\|_\lambda \leq \l\|u-v\r\|_\lambda$ that holds for any vectors $u$ and $v$.
Lemma \ref{lemma:sparsity inequality} implies that  
\[
\|\Delta^\beta\|_\lambda  + \|\Delta^\theta\|_\mu  \leq 4\sqrt{\frac{\sum_{i=1}^s \lambda_i^2}{\kappa(s)} + \frac{\log(1/\delta)}{n}} \|\Delta^\beta\|_\Sigma + 4\sqrt{\sum_{i=1}^{o'} \mu_i^2} \|\Delta^\theta\|_2.
\]
Using Proposition \ref{prop:RE} and the condition on the sample size $n$, we have that
\[
x^2 \geq (\|\Delta^\beta\|^2_\Sigma + \|\Delta^\theta\|_2^2 )/16,
\]
and that
\[
\|\Delta^\beta\|_\lambda  + \|\Delta^\theta\|_\mu  \leq x/2.
\]
Combining these bounds with \eqref{equation:thm1 middle result 0}, we get
\begin{multline}
\label{equation:thm1 middle result 1}
(\|\Delta^\beta\|^2_\Sigma + \|\Delta^\theta\|_2^2 )/16\leq  x^2 \leq 10\sigma \l(\sqrt{\frac{\sum_{i=1}^s \lambda_i^2}{\kappa(s)} + \frac{\log(1/\delta)}{n}} \|\Delta^\beta\|_\Sigma + \sqrt{\sum_{i=1}^{o'} \mu_i^2} \|\Delta^\theta\|_2 \r).
\end{multline}
Therefore,
\[
\l\|\Delta^\beta\r\|^2_\Sigma + \l\|\Delta^\theta\r\|^2_2 \leq 100 \sigma^2\l(  \frac{\sum_{i=1}^s \lambda_i^2}{\kappa(s)} + \frac{\log(1/\delta)}{n} + \sum_{i=1}^{o'} \mu_i^2\r).
\]
Moreover,
\be{
\|\Delta^\beta\|_\lambda  + \|\Delta^\theta\|_\mu  \leq 100 \sigma\l(  \frac{\sum_{i=1}^s \lambda_i^2}{\kappa(s)} + \frac{\log(1/\delta)}{n} + \sum_{i=1}^{o'} \mu_i^2\r)
}
which concludes  the proof.

%=====Remark=====
\begin{remark}\label{remark: difference between Q_true and Q_hat}
Direct computation shows that 
\ml{
\l| (Q^*)^{\frac12} - (\wh{Q})^{\frac12} \r| = \frac{1}{\sqrt{2}} \l| \l\|Y- X^{(n)}\beta^* - \theta^*\r\|_2 - \l\|Y-X^{(n)}\wh{\beta} - \wh{\theta}\r\|_2  \r|
\\
\leq \frac{1}{\sqrt{2}} \l\|X^{(n)} \Delta^\beta + \Delta^\theta\r\|_2,
}
from which we derive that
\[
\l| (Q^*)^{\frac12} - (\wh{Q})^{\frac12} \r|  \leq  (Q^*)^{\frac12}/20.
\]
 This shows that under the assumptions of the lemma, $\sqrt{\wh{Q}}$ can be close to $\sqrt{Q^*}$. This fact will be important in the next proof.
\end{remark}

Finally, we present the proof of the error bound for the estimator $\wh{\beta}$ only, as opposed to the vector $(\wh\beta,\wh\theta)$. The main idea of the proof, first used in \cite{dalalyan2019outlier}, is to treat $\theta^*$ as a ``nuisance parameter'' and repeat parts of the previous argument. Recall the key notation: 
\begin{itemize}
	\item $\wh{Q}:=Q(\wh{\beta},\wh{\theta})$ and $Q^*:= Q(\beta^*,\theta^*)$;
	\item $A^{(n)} := \frac{1}{\sqrt{n}} A$, whenever $A$ is a number, vector or matrix;
	%\item Given vectors $u\in \mb{R}^n$ and $v \in \mb{R}^p$, we denote $[u;v]$ to be the $n\times p$ vector constructed by putting $v$ below $u$. 
	\item $\wh{\xi} := Y - X \wh{\beta} - \sqrt{n}\wh{\theta}$;
	\item Also, we note that $\wh{Q} =  \frac{1}{2n} \l\| \wh{\xi} \r\|_2^2 $ and $Q^* = \frac{1}{2n} \l\|\xi \r\|_2^2$.
\end{itemize}
Since 
\[
\wh{\beta} \in \argmin_{\beta} \l\{ \frac{1}{\sqrt{2n}} \l\|Y-X\beta - \sqrt{n}\wh\theta\r\|_2 +  \l\|\beta\r\|_\lambda  \r\},
\]
there exists $\wh{v} \in \partial \l\|\wh\beta\r\|_1$ with $\wh{v}^T \wh{\beta} = \l\|\wh{\beta}\r\|_\lambda$ such that
\[
-\frac{1}{n}X^T (Y-X\wh\beta - \sqrt{n} \wh{\theta}) + 2 (\wh{Q})^{\frac12}  \wh{v} = 0.
\]
It implies, together with the identity $Y^{(n)} = X^{(n)}\wh{\beta} + \wh{\theta} +\sigma \xi^{(n)}$, that
\[
(X^{(n)})^T \l( X^{(n)}\Delta^\beta + \Delta^\theta - \sigma \xi^{(n)} \r) + 2(\wh{Q})^{\frac12}\lambda_s\wh{v} = 0.
\]
Multiplying both sides from the left by $\l( \Delta^\beta \r)^T$, we get that
\begin{equation}
\label{equation:fixed design improvement 0}
%\l\|X^{(n)}\Delta^\beta\r\|_2^2 =  -(\Delta^\beta)^T (X^{(n)})^T \Delta^\theta + (\Delta^\beta)^T (X^{(n)})^T \xi^{(n)} + 2(\wh{Q})^{\frac12} \lambda_s \dotp{\Delta^\beta}{\wh{v}}.
\l\|X^{(n)}\Delta^\beta\r\|_2^2 = - \dotp{X^{(n)}\Delta^\beta}{\Delta^\theta} + \dotp{X^{(n)}\Delta^\beta}{\sigma \xi^{(n)}} - 2(\wh{Q})^{\frac12}  \dotp{\Delta^\beta}{\wh{v}}.
\end{equation}
Recall that 
\[
-\dotp{\Delta^{\beta}}{\wh{v}} \leq \l\|\beta^*\r\|_\lambda - \l\|\wh\beta\r\|_\lambda \leq 2\sqrt{\frac{\sum_{i=1}^s \lambda_i^2}{\kappa(s)}} \|\Delta^\beta\|_\Sigma - \|\Delta^\beta\|_\lambda/2,
\]
which together with remark \ref{remark: difference between Q_true and Q_hat} implies that
\[
\frac{1}{2} \sigma \leq ( \wh{Q} )^{\frac12} \leq \frac{3}{2} \sigma.
\]
Moreover, from the inequality \eqref{eq:property3} we see that
\[
\dotp{X^{(n)}\Delta^\beta}{\xi^{(n)}} \leq  \|\Delta^\beta\|_\lambda/20  +  \sqrt{\frac{\log(1/\delta)}{n}}\|\Delta^\beta\|_\Sigma.
\]
Combining these results with the relation \eqref{equation:fixed design improvement 0}, we deduce that on the event $\mathcal{E}$,
\begin{multline*}
\l\|X^{(n)}\Delta^\beta\r\|_2^2 \leq - \dotp{X^{(n)}\Delta^\beta}{\Delta^\theta}  +\sigma\l(  \sqrt{\frac{\log(1/\delta)}{n}}\|\Delta^\beta\|_\Sigma + 2\sqrt{\frac{\sum_{i=1}^s \lambda_i^2}{\kappa(s)}} \|\Delta^\beta\|_\Sigma - \|\Delta^\beta\|_\lambda/2 \r).
\end{multline*} 
Inequality \eqref{eq:property2} yields that
\[
- \dotp{X^{(n)}\Delta^\beta}{\Delta^\theta}  \leq\|\Delta^\beta\|_\lambda \|\Delta^\theta\|_2/10 + \|\Delta^\theta\|_\lambda \|\Delta^\beta\|_\Sigma/10 + C \sqrt{\frac{\log(1/\delta)}{n}}\|\Delta^\beta\|_\Sigma\|\Delta^\theta\|_2.
\]
Applying \eqref{eq:property1}, we deduce that
\[
\l\|X^{(n)}\Delta^\beta\r\|_2^2\geq \frac{1}{2} \|\Delta^\beta\|_\Sigma^2 - \|\Delta^\beta\|^2_\lambda/4.
\]
Since $\|\Delta^\beta\|_\lambda + \|\Delta^\theta\|_2  \ll 1$,
\be{
\label{equation:fixed design improvement 1}
\frac{1}{2} \|\Delta^\beta\|_\Sigma^2 \leq \sigma\l(  \sqrt{\frac{\log(1/\delta)}{n}} + 2\sqrt{\frac{\sum_{i=1}^s \lambda_i^2}{\kappa(s)}} +  \|\Delta^\theta\|_\lambda /10  \r)\|\Delta^\beta\|_\Sigma.
}
Therefore,
\[
\|\Delta^\beta\|_\Sigma^2 \leq 100\sigma^2\l(  \frac{\log(1/\delta)}{n} + \frac{\sum_{i=1}^s \lambda_i^2}{\kappa(s)} +  \|\Delta^\theta\|^2_\lambda/2  \r).
\]
Observe that
\begin{align*}
\|\Delta^\theta\|^2_\lambda/2 &\leq \l( \sum_{j\geq o'} \lambda_j |\Delta^\theta|_{(j)} \r)^2 +  \|\Delta^\theta\|_2^2 \sum_{j=1}^{o'} \lambda_j^2 \\
&\leq \l( \sum_{j\geq o'} (\lambda_j/\mu_j) \mu_j  |\Delta^\theta|_{(j)} \r)^2 + 10 \sum_{j=1}^{o'} \lambda_j^2(1+ \sum_{j=1}^{o'} \mu_j^2)\\
&\leq \underset{j\geq o'}{\max}(\lambda^2_j/\mu^2_j)(\|\Delta^\theta\|^2_\mu+20(\sum_{j=1}^{o'} \mu_j^2)^2 ) + 10\sum_{j=1}^{o'} \lambda_j^2,
\end{align*}
where we used the inequality $\sum_{j=1}^{o'} \lambda_j^2 \leq 2 o'\lambda^2_{o'} \leq 2(\lambda^2_{o'}/\mu^2_{o'})\sum_{j=1}^{o'} \mu_j^2$. Since $\lambda_j \leq \mu_j$, we conclude that
\[
\|\Delta^\beta\|_\Sigma^2 \leq 100\sigma^2\l(  \frac{\log(1/\delta)}{n} + \frac{\sum_{i=1}^s \lambda_i^2}{\kappa(s)} + \underset{j\geq o'}{\max}(\lambda^2_j/\mu^2_j) (\sum_{j=1}^{o'} \mu_j^2)^2  \r).
\]
	\begin{comment}

\begin{remark}
\label{remark:fixed design 2}
It follows from Remark \ref{remark:fixed design} that the claim of Theorem \ref{thm: sqrt lasso bound} also holds in the fixed design setting, with $\frac{\l\|X\l(\wh{\beta} - \beta^* \r)\r\|_2^2}{n}$ in place of $\|\wh{\beta} - \beta^*\|^2_\Sigma$. The proof is identical and requires minimal cosmetic changes.
\end{remark}

	\end{comment}
%################################################################
\subsection{Proof of Theorems \ref{thm:sqrt lasso prob bound} and  \ref{thm:sqrt lasso prob bound 2}. }
%################################################################
%======================================
%==============end Section: proofs==========================

Recall that event $\mathcal{E}$ and the properties of sub-Gaussian designs expressed via the inequalities \eqref{eq:property1}, \eqref{eq:property2}, \eqref{eq:property3} hold for a given $\delta$ with probability at least $1-3\delta - 5\exp(-co')$. Observe that $\sum_{i=1}^s \lambda_i^2 \leq Cs\log(ep/s)/n$. For the case of fixed thresholds $\mu_i = \frac{C}{\sqrt{n}} \l(\frac{n}{m}\r)^{1/\tau}$, where $m = \log(1/\delta)$, we get that
\begin{multline*}
\underset{j\geq o'}{\max}\l(\lambda^2_j/\mu^2_j \r) \l(\sum_{j=1}^{o'} \mu_j^2 \r)^2 = C \l(\frac{o'}{n}\r)^2\log(n/o')\l(\frac{n}{m}\r)^{2/\tau}
\\
=C\l(\frac{o'}{n}\r)^{2-2/\tau}\log(n/o')\l(\frac{o'}{m}\r)^{2/\tau}.
\end{multline*}
Choosing $o' = o + m$ with $m = \log(1/\delta)$, we deduce that
\[
\underset{j\geq o'}{\max}(\lambda^2_j/\mu^2_j) (\sum_{j=1}^{o'} \mu_j^2)^2 \leq C\l(\l(\frac{o}{n}\r)^{2-2/\tau}\log(n/o)(1+(o/m)^{2/\tau})  +\frac{\log(1/\delta)}{n} \r).
\]
Theorem \ref{thm: sqrt lasso bound} immediately yields that with probability at least $1-8\delta$,
\[
\|\Delta^\beta\|_\Sigma^2 \leq C\sigma^2\l(  \frac{\log(1/\delta)}{n} + \frac{s\log(ep/s)/n}{\kappa(s)} + \l(\frac{o}{n}\r)^{2-2/\tau}\log(n/o)(1+(o/\log(1/\delta))^{2/\tau})  \r).
\]
This completes the proof of Theorem \ref{thm:sqrt lasso prob bound}.

For the case of the adaptive threshold $\mu_i = \frac{C}{\sqrt{n}} \l(\frac{n}{i}\r)^{1/\tau}$, for any $\delta$ we can choose $o' = o + m$ with $m = \log(1/\delta)$ so that
\[
\underset{j\geq o'}{\max}\l(\lambda^2_j/\mu^2_j\r) \l(\sum_{j=1}^{o'} \mu_j^2\r)^2 \leq C\l( \frac{\sum_{i=1}^{o'} (n/i)^{2/\tau}}{n}\r)^2\leq C \frac{(\sum_{i=1}^{o'} (1/i)^{2/\tau})^2}{n^{2-4/\tau}}.
\]
Hence, for $\tau>2$ we have 
\[
\underset{j\geq o'}{\max}(\lambda^2_j/\mu^2_j) (\sum_{j=1}^{o'} \mu_j^2)^2 \leq C(o'/n)^{2-4/\tau}.
\]
%\Blindtext[1]
In view of Theorem \ref{thm: sqrt lasso bound},
\[
\|\Delta^\beta\|_\Sigma^2 \leq C\sigma^2\l(  \frac{\log(1/\delta)}{n} + \frac{s\log(ep/s)/n}{\kappa(s)} + (o + \log(1/\delta)/n)^{2-4/\tau}\r)
\]
with probability at least $1-8\delta$. 
Finally, if the noise is sub-Gaussian, then
\[
\underset{j\geq o'}{\max}(\lambda^2_j/\mu^2_j) (\sum_{j=1}^{o'} \mu_j^2)^2 \leq C \l(\frac{o'\log(n/o')}{n} \r)^2,
\]
and we get that
\[
\|\Delta^\beta\|_\Sigma^2 \leq C\sigma^2\l(  \frac{\log(1/\delta)}{n} + \frac{s\log(ep/s)/n}{\kappa(s)} +\l(\frac{o\log(n/o)}{n} \r)^2\r),
\]
again with probability at least $1-8\delta$. 
The last inequality holds since 
\[
\frac{\log(1/\delta)}{n} \geq \l(\frac{\log(1/\delta)\log(n/\log(1/\delta))}{n}\r)^2
\] 
whenever $\log(1/\delta)$ is smaller than $n$. This completes the proof of Theorem \ref{thm:sqrt lasso prob bound 2}.

% In case your dissertation has multiple volumes.
% \addvolumecontents{thesis_part2}
% \addvolumecontents{thesis_part3}
% \addvolumecontents[lof]{thesis_part2}

\end{document}